\newtheorem{theorem}{Theorem}[section]
\newtheorem{lemma}{Lemma}[section]
\newtheorem{proposition}{Proposition}[section]
\theoremstyle{definition}
\newtheorem{remark}{Remark}[section]
\numberwithin{equation}{section}
\newcommand\blfootnote[1]{\begingroup\renewcommand\thefootnote{}\footnote{#1}\addtocounter{footnote}{-1}\endgroup}
\begin{document}

\title{
{\bf\Large Positive subharmonic solutions \\ to superlinear ODEs with indefinite weight}
\footnote{Work partially supported by the Grup\-po Na\-zio\-na\-le per l'Anali\-si Ma\-te\-ma\-ti\-ca, la Pro\-ba\-bi\-li\-t\`{a} e le lo\-ro
Appli\-ca\-zio\-ni (GNAMPA) of the Isti\-tu\-to Na\-zio\-na\-le di Al\-ta Ma\-te\-ma\-ti\-ca (INdAM).
Progetto di Ricerca 2016: ``Problemi differenziali non lineari: esistenza, molteplicit\`{a} e propriet\`{a} qualitative delle soluzioni''. 
It is also partially supported by the project ``Existence and asymptotic behavior of solutions to systems of semilinear elliptic partial differential equations'' (T.1110.14) of the \textit{Fonds de la Recherche Fondamentale Collective}, Belgium.}
}
\author{{\bf\large Guglielmo Feltrin}
\vspace{1mm}\\
{\it\small D\'{e}partement de Math\'{e}matique, Universit\'{e} de Mons}\\
{\it\small Place du Parc 20}, {\it\small B-7000 Mons, Belgium}\\
{\it\small e-mail: guglielmo.feltrin@umons.ac.be}\vspace{1mm}
}

\date{}

\maketitle

\vspace{-2mm}

\begin{abstract}
\noindent
We study the positive subharmonic solutions to the second order nonlinear ordinary differential equation
\begin{equation*}
u'' + q(t)  g(u) = 0,
\end{equation*}
where $g(u)$ has superlinear growth both at zero and at infinity, and $q(t)$ is a $T$-periodic sign-changing weight.
Under the sharp mean value condition $\int_{0}^{T} q(t) ~\!dt < 0$, combining Mawhin's coincidence degree theory with the Poincar\'{e}-Birkhoff fixed point theorem, we prove that there exist positive subharmonic solutions of order $k$ for any large integer $k$. Moreover, when the negative part of $q(t)$ is sufficiently large, using a topological approach still based on coincidence degree theory, we obtain the existence of positive subharmonics of order $k$ for any integer $k\geq2$.
\blfootnote{\textit{AMS Subject Classification:} Primary: 34C25, Secondary: 34B18, 37J10, 47H11.}
\blfootnote{\textit{Keywords:} subharmonic solutions, superlinear indefinite problems, positive solutions, multiplicity results, Mawhin's coincidence degree, Poincar\'{e}-Birkhoff fixed point theorem.}
\end{abstract}

\section{Introduction}\label{section-1}

In this paper we deal with positive subharmonic solutions to nonlinear differential equations with indefinite weight and we gather some results recently obtained in \cite{BoFe-16,FeZa-pp2015,FeZa-15ade}.

\medskip

Our investigation is devoted to the second order differential equation
\begin{equation}\label{eq-int-f}
u'' + f(t,u)=0,
\end{equation}
especially when the nonlinear vector field is of the form
\begin{equation}\label{eq-1.2}
f(t,u) := q(t)g(u),
\end{equation}
thus covering the classical \textit{superlinear indefinite case},
namely $g(u) = |u|^{p-1}u$, with $p > 1$, and $q(t)$ a sign-changing coefficient.

The investigation of boundary value problems associated with equation \eqref{eq-int-f} when $f$ is \textit{superlinear at infinity} with respect to $s$, namely
\begin{equation*}
\lim_{s\to\pm\infty} \dfrac{f(t,s)}{s} = \infty,
\end{equation*}
is a topic which has been widely studied, employing various different approaches. We refer to the introduction in \cite{PaZa-00} and the references therein for an interesting historical presentation on the subject.

In the present paper we deal with the \textit{periodic indefinite problem} associated with \eqref{eq-int-f}, namely we suppose that $t\to f(t,s)$ is a $T$-periodic and sign-changing map.
In this framework, starting with the pioneering work \cite{Bu-76} by Butler, there are a lot of results for \textit{oscillatory} solutions.  In particular, they provide infinitely many periodic and subharmonic solutions  with a large number of zeros, as well as globally bounded solutions defined on the real line and exhibiting a complex behavior (see for instance \cite{CaDaPa-02,PaZa-04,TeVe-00}).
We remark that large sign-changing solutions to \eqref{eq-int-f} have a strong oscillatory behavior and, in the indefinite case, some solutions blow up. Accordingly, the study of sign-changing solutions of indefinite problems has two main feature: the absence of a priori bounds and the non-continuability of some solutions. For these reasons the analysis of sign-changing solutions is delicate and 
strong regularity assumptions on the nonlinearity are required, including that $f(t,s)$ is continuous, is locally Lipschitz in $t$ and of locally bounded variation in $s$, the set of values of $t$ for which $f(t,s)=0$ is an isolated set (cf.~\cite{Bu-76} and subsequent contributions). However, we stress that no growth condition at $s=0$ is required.

Our investigation is dedicated to the study of \textit{positive} solutions of the \textit{periodic indefinite problem} associated with \eqref{eq-int-f} and provide a double contribution in this context. On one hand, we are going to present a topological approach that allows to avoid all the above regularity condition on $f(t,s)$: a minimal set of assumptions on the nonlinearity will be required, but including the superlinear growth at zero (cf.~Remark~\ref{rem-2.1}). On the other hand, our paper is one of the fewer investigations on \textit{positive} periodic solutions to equations like $(\mathscr{E})$ (see \cite{BaBoVe-15,FeZa-pp2015}, dealing with existence, multiplicity and chaotic dynamics of positive solutions). This latter aspect places in the investigation on indefinite equations of the form
\begin{equation*}
-\Delta u = q(x) g(u), \quad u \in \Omega \subseteq \mathbb{R}^{N},
\end{equation*}
that arise in many models concerning population dynamics, differential geometry and mathematical physics, and for which only non-negative solutions make sense. Concerning indefinite problems, we mention the contributions \cite{AlTa-93,AmLG-98,BeCDNi-94,HeKa-80} and we refer to the introductions in \cite{Ac-09,Bo-16sur,Fe-16thesis,FeZa-pp2015,So-pp2016} for a more complete discussion and bibliography on the subject.

\medskip

We can now illustrate our results. Let ${\mathbb{R}}^{+}:=\mathopen{[}0,+\infty\mathclose{[}$ denote the set of non-negative real numbers and let
$g\colon {\mathbb{R}}^{+} \to {\mathbb{R}}^{+}$ be a continuously differentiable function such that
\begin{equation*}
g(0) = 0, \qquad g(s) > 0 \quad  \text{for } \; s > 0.
\leqno{(g_{*})}
\end{equation*}
Let $T>0$ and let $q\colon \mathbb{R} \to \mathbb{R}$ be a $T$-periodic locally integrable function.

In this survey we focus our attention on the second order ordinary differential equation
\begin{equation*}
u''+q(t)g(u)=0.
\leqno{(\mathscr{E})}
\end{equation*}
Our main goal is the investigation of positive subharmonic solutions to $(\mathscr{E})$ when $q(t)$ is a sign-changing function and $g(s)$ satisfies the following condition
\begin{equation*}
g'(0) = 0
\quad \text{ and } \quad
\lim_{s\to +\infty} \dfrac{g(s)}{s} = +\infty,
\leqno{(g_{s})}
\end{equation*}
namely when $g(s)$ has a superlinear growth at zero and at infinity, thus the classical case $g(s)=s^{p}$, with $p>1$, is covered.

In this paper, following a standard definition, we use the terminology \textit{subharmonic solution of order $k$} to $(\mathscr{E})$ (where $k\geq 2$ is an integer number) to indicate a $kT$-periodic solution to $(\mathscr{E})$ which is not $\ell T$-periodic
for any $\ell=1,\ldots,k-1$, in other words, $kT$ is the minimal period of $u(t)$ in the set of the integer multiples of $T$. It is worth noting that, assuming that $T$ is the minimal period of $q(t)$, from hypothesis $(g_{*})$ we derive that $kT$ is the minimal period of any positive subharmonic solution of order $k$ (cf.~the discussion in \cite[\S~4]{FeZa-pp2015}).
As a further remark, we underline that, if $u(t)$ is a positive subharmonic solution of order $k$ to $(\mathscr{E})$, then the $k-1$ time-translated functions $u(\cdot + \ell T)$, for $\ell =1, \ldots, k-1$,  are positive subharmonic solutions of order $k$ too. These solutions, though distinct, belong to the same periodicity class (in particular, they have to be considered equivalent when counting subharmonics).

From the above definition, it is clear that there are two main issues to face in the search of subharmonics to $(\mathscr{E})$: the existence of positive $kT$-periodic solutions to $(\mathscr{E})$ and the proof that $kT$ is the minimal period (in the sense described above) of some of these solutions (which is the most difficult point).

In this perspective, we now present two necessary conditions to the existence of positive $kT$-periodic solutions (where $k\geq1$ an integer number).
Indeed, if $u(t)$ is any positive $kT$-periodic solution to $(\mathscr{E})$, then integrating equation $(\mathscr{E})$ on $\mathopen{[}0,kT\mathclose{]}$ we obtain
\begin{equation*}
0 = - \int_{0}^{kT} u''(t) ~\!dt = \int_{0}^{kT} q(t)g(u(t))~\!dt.
\end{equation*}
Therefore, by $(g_{*})$, $q(t)$ has to change its sign (if not identically zero).
A second relation can be derived when $g'(s)>0$ for $s>0$, as in the case $g(s)=s^{p}$, with $p>1$. Precisely, dividing equation $(\mathscr{E})$ by $g(u(t))$ and integrating by parts, we find
\begin{equation*}
k\int_{0}^{T} q(t)~\!dt = \int_{0}^{kT} q(t)~\!dt = - \int_{0}^{kT} \biggl{(}\dfrac{u'(t)}{g(u(t))}\biggr{)}^{2} g'(u(t))~\!dt <0.
\end{equation*}
In the sequel we will show that this condition is also sufficient for the existence of subharmonics (cf.~Theorem~\ref{th-prel1}).

Since the main motivation for the present investigation is the superlinear equation $u''+q(t)u^{p}=0$ (with $p>1$), as a natural hypothesis we suppose that $q\colon \mathbb{R} \to \mathbb{R}$ is a $T$-periodic locally integrable \textit{sign-changing} function (i.e.~an \textit{indefinite} weight) satisfying the mean value condition
\begin{equation*}
\int_{0}^{T} q(t)~\!dt <0.
\leqno{(q_{\#})}
\end{equation*}
Additionally, we assume that in a time-interval of length $T$ there exists a finite number of closed pairwise disjoint subintervals where $q(t) \succ 0$ (i.e.~$q(t)\geq 0$ almost everywhere and $q\not\equiv 0$ on each interval), separated by closed intervals where $q(t) \prec 0$ (i.e.~$-q(t) \succ 0$).
More precisely, thanks to the periodicity of $q(t)$ and for ease of notation, we assume that
\begin{itemize}
\item[$(q_{*})$]
\textit{there exist $m \geq 1$ closed and pairwise disjoint intervals $I^{+}_{1},\ldots,I^{+}_{m}$
separated by $m$ closed intervals $I^{-}_{1},\ldots,I^{-}_{m}$ such that
\begin{equation*}
q(t)\succ 0 \; \text{ on } I^{+}_{i}, \qquad q(t)\prec 0 \; \text{ on } I^{-}_{i},
\end{equation*}
and, moreover,
\begin{equation*}
\bigcup_{i=1}^{m} I^{+}_{i} \, \cup \, \bigcup_{i=1}^{m} I^{-}_{i} = \mathopen{[}0,T\mathclose{]}.
\end{equation*}}
\end{itemize}

\medskip

In the manuscript we present two results of existence of infinitely many positive subharmonics.

The first one combines an application of the Poincar\'{e}-Birkhoff fixed point theorem, a smart trick used in \cite{BrHe-90} by Brown and Hess (that requires the strict convexity of $g(s)$) and coincidence degree theory. It states the following.

\begin{theorem}\label{th-main1}
Let $q \colon \mathbb{R} \to \mathbb{R}$ be a $T$-periodic locally integrable function satisfying $(q_{\#})$ and $(q_{*})$.
Let $g \in \mathcal{C}^{2}(\mathbb{R}^{+})$ satisfy $(g_{*})$, $(g_{s})$ and 
\begin{equation*}
g''(s) > 0 \quad \text{for } \; s>0.
\leqno{(g_{**})}
\end{equation*}
Then there exists a positive $T$-periodic solution $u^{*}(t)$ of equation $(\mathscr{E})$;
moreover, there exists $k^{*} \geq 1$ such that for any integer $k \geq k^{*}$ there exists an integer $m_{k} \geq 1$ such that, for any integer $j$ relatively prime with $k$ and such that
$1 \leq j \leq m_{k}$, equation $(\mathscr{E})$ has two positive subharmonic solutions $u_{k,j}^{(i)}(t)$ ($i=1,2$) of order $k$ (not belonging to the same periodicity class),
such that $u_{k,j}^{(i)}(t) - u^{*}(t)$ has exactly $2j$ zeros in the interval $\mathopen{[}0,kT\mathclose{[}$.
\end{theorem}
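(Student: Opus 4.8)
The plan is to obtain the positive subharmonics as $kT$-periodic points of the Poincar\'{e} map, detected through a twist argument of Poincar\'{e}-Birkhoff type applied to the winding of $u$ around the $T$-periodic solution $u^{*}$. First I would produce $u^{*}$ itself: the existence of a positive $T$-periodic solution under $(q_{\#})$, $(q_{*})$ and the superlinear growth $(g_{s})$ is exactly the content of the degree-theoretic result quoted as Theorem~\ref{th-prel1}, so I would simply invoke it. With $u^{*}$ fixed I pass to the difference $v:=u-u^{*}$, which solves
\[
v'' + q(t)\bigl[g(u^{*}+v)-g(u^{*})\bigr]=0 .
\]
In these terms a positive subharmonic of order $k$ is a $kT$-periodic solution $v$ with $u^{*}+v>0$ on $\mathbb{R}$, and the prescription ``$u-u^{*}$ has exactly $2j$ zeros in $\mathopen{[}0,kT\mathclose{[}$'' becomes the requirement that the trajectory $t\mapsto(v(t),v'(t))$ performs exactly $j$ turns around the origin of the phase plane over one $kT$-window.

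Next I would introduce the rotation number. Writing the first-order system for $(v,v')$ and using Pr\"{u}fer-type angular coordinates, the winding is produced essentially on the intervals $I_{i}^{+}$, where $q\succ0$ forces a concave, oscillatory regime, while on the intervals $I_{i}^{-}$, where $q\prec0$, the flow is of hyperbolic type and the angle is, at worst, pushed back by a controlled amount; this is where the interval structure $(q_{*})$ enters. The core of the argument is then a pair of rotation estimates exhibiting a wide twist. At the \emph{outer} end, the superlinear growth of $g$ at infinity makes large-amplitude trajectories wind arbitrarily many times over each $I_{i}^{+}$, so that the rotation over $\mathopen{[}0,kT\mathclose{]}$ can be made larger than any prescribed integer. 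At the \emph{inner} end, the superlinearity at zero, $g'(0)=0$, makes the dynamics slow near the equilibrium $(0,0)$ (that is, where $u$ approaches $0$), so that trajectories skirting this region wind arbitrarily little and the rotation over $\mathopen{[}0,kT\mathclose{]}$ can be pushed below $1$. Here the strict convexity $(g_{**})$ enters through the Brown-Hess trick: compared with the \emph{nodeless} positive solution $u^{*}$, convexity yields the monotone dependence of the angular speed on the amplitude needed to turn these two one-sided estimates into a genuine, monotone twist, and to separate the two solutions produced at each winding.

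I would then fix $k^{*}$ so large that, for every $k\geq k^{*}$, the rotation over $\mathopen{[}0,kT\mathclose{]}$ sweeps an interval containing $\{1,\dots,m_{k}\}$ for some integer $m_{k}\geq1$, and apply the Poincar\'{e}-Birkhoff theorem to the $k$-th iterate of the (well-defined) Poincar\'{e} map on the resulting topological annulus. For each integer $j$ with $1\leq j\leq m_{k}$ this yields two fixed points, giving two $kT$-periodic functions $v$ in distinct periodicity classes whose trajectories wind exactly $j$ times, i.e.\ with exactly $2j$ zeros of $u-u^{*}$ in $\mathopen{[}0,kT\mathclose{[}$. Restricting to those $j$ that are relatively prime with $k$ secures the minimal period: were the period a proper divisor $\ell T$ with $\ell\mid k$, the zero count would factor as $2j=(k/\ell)\cdot 2j'$, forcing $(k/\ell)\mid j$ and contradicting $\gcd(j,k)=1$. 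Finally, reading back $u=u^{*}+v$ and keeping the annulus inside the region carved out by the a~priori bounds of the degree argument guarantees that the solutions are positive.

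The step I expect to be the main obstacle is the simultaneous construction of the annulus: the Poincar\'{e} map must be well defined on it (global existence on $\mathopen{[}0,kT\mathclose{]}$ is a real issue, since large solutions of superlinear indefinite equations may blow up and lose continuability), the solutions must stay positive, \emph{and} the rotation-number twist must span the full range from below $1$ up to $m_{k}$. Extracting the low end of the twist from the slow passage near $u=0$ (via $g'(0)=0$), extracting the high end from the superlinearity at infinity, and gluing them monotonically by means of the convexity comparison $(g_{**})$ with the nodeless $u^{*}$ while respecting the sign structure $(q_{*})$ of the weight is exactly the delicate core of the proof.
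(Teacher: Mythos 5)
Your skeleton is the paper's: produce $u^{*}$ from Theorem~\ref{th-prel1}, pass to $v:=u-u^{*}$, get subharmonics from a Poincar\'{e}--Birkhoff twist, and use coprimality of $j$ and $k$ for minimality of the period. But your two rotation estimates are aimed at the wrong annulus and run in the wrong direction. The winding that counts the zeros of $u-u^{*}$ is around the origin of the $(v,v')$-plane; the set ``where $u$ approaches $0$'' is not an inner region of that plane, it is a neighborhood of the curve $v=-u^{*}(t)$, which lies at positive distance from the origin and does not enclose it, so the slow dynamics near the degenerate equilibrium $u=0$ cannot furnish an inner boundary estimate for an annulus around $v=0$. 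What actually happens near the origin of the $(v,v')$-plane, i.e.\ near $u^{*}$, is the opposite of slow: the entire role of the convexity hypothesis $(g_{**})$ is the Brown--Hess trick of Lemma~\ref{lem-morse}, which yields $\lambda_{0}\bigl(q(t)g'(u^{*}(t))\bigr)<0$, so the linearization at $u^{*}$ rotates and solutions close to $u^{*}$ perform more and more turns as $k$ grows. That is the \emph{fast inner} half of the twist, i.e.\ condition $(i)$ of Proposition~\ref{propsub}. You instead assign $(g_{**})$ a role (``monotone twist'', ``separating the two solutions'') that it does not play and that is not needed: Poincar\'{e}--Birkhoff requires only a boundary twist and produces the two fixed points by itself.

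The outer half of your twist is the genuine gap. Fast rotation at infinity via $(g_{s})$ would require iterating the Poincar\'{e} map of the true superlinear equation on large annuli, which is exactly where solutions of superlinear indefinite equations blow up, so that map is not even defined there; you name this obstacle but do not resolve it. It is also incompatible with positivity: a large-amplitude turn around $v=0$ must pass through points where $v$ is very negative, hence $u=u^{*}+v<0$, so fixed points found on such an annulus cannot be positive, and ``keeping the annulus inside the region carved out by the a priori bounds'' contradicts the large outer boundary your estimate needs. The paper resolves both issues simultaneously by giving up superlinearity at infinity: it fixes the a priori constant $R$, replaces $q(t)g(s)$ by the truncation $h(t,s)$ of equation \eqref{eqsub} (zero for $s<0$, affine for $s>R$), so that global continuability is automatic and large solutions cannot complete even one revolution (once $u<0$ with $u'<0$ the motion in the phase plane is a straight line and never returns), producing a \emph{slow outer} boundary --- the reverse of yours. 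Positivity then comes not from the size of the annulus but from condition $(ii)$ of Proposition~\ref{propsub} with the lower solution $\alpha\equiv 0$, which forces $u_{k,j}^{(i)}\geq 0$, combined with uniqueness for the Cauchy problems to get strict positivity, and the bound $u_{k,j}^{(i)}<R$ to conclude that these solutions of \eqref{eqsub} solve the original equation $(\mathscr{E})$. Without the truncation, the eigenvalue lemma, and the lower solution, the argument you outline cannot close.
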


We underline that the subharmonic solutions obtained in Theorem~\ref{th-main1} oscillate around a positive $T$-periodic solution $u^{*}(t)$ of $(\mathscr{E})$: this property is crucial in order to obtain the ``minimality'' of the period.
We also stress that, taking $j=1$ in the statement of Theorem~\ref{th-main1}, we have the existence of two subharmonic solutions of order $k$ for any large integer $k$.

\medskip

The second result follows a line of research initiated by G\'{o}mez-Re\~{n}asco and L\'{o}pez-G\'{o}mez in \cite{GRLG-00}, where the authors asserted that the Dirichlet problem associated with $(\mathscr{E})$ has at least $2^{m}-1$ positive solutions when the negative part of the weight $q(t)$ is sufficiently large (and $m$ is the number of positive humps of $q(t)$ separated by negative ones, as in hypothesis $(q_{*})$).
According to a standard notation adopted in the contributions that followed from \cite{GRLG-00} (see, for instance, \cite{BoGoHa-05,BoDaPa-pp2016,FeZa-15jde,GaHaZa-03mod,GiGo-09jde}), it is convenient to introduce the parameter-dependent equation
\begin{equation*}
u'' + \bigl{(}a^{+}(t) - \mu a^{-}(t)\bigr{)} g(u) = 0,
\leqno{(\mathscr{E}_{\mu})}
\end{equation*}
with $\mu>0$ and
\begin{equation}\label{amu}
q(t) = a_{\mu}(t) := a^{+}(t) - \mu a^{-}(t), \quad t\in\mathbb{R},
\end{equation}
where $a^{+}(t)$ and $a^{-}(t)$ are, respectively, the positive and the negative part of a $T$-periodic locally integrable function $a \colon \mathbb{R} \to \mathbb{R}$.

In this setting, using a topological approach based on coincidence degree theory, we obtain the following.

\begin{theorem}\label{th-main2}
Let $q \colon \mathbb{R} \to \mathbb{R}$ be a $T$-periodic locally integrable function of the form \eqref{amu} satisfying $(q_{*})$.
Let $g \in \mathcal{C}^{1}(\mathbb{R}^{+})$ satisfy $(g_{*})$ and $(g_{s})$.
Then there exists $\mu^{*}>0$ such that, for all $\mu>\mu^{*}$ and for every integer $k\geq 2$,
equation $(\mathscr{E}_{\mu})$ has a subharmonic solution of order $k$.
\end{theorem}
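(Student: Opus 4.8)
The plan is to realize $(\mathscr{E}_{\mu})$ as a coincidence equation on the space of $kT$-periodic functions and to detect its positive solutions through Mawhin's coincidence degree, using that on an interval of length $kT$ the weight $a_{\mu}$ exhibits exactly $mk$ positive humps, namely the translates of $I^{+}_{1},\dots,I^{+}_{m}$ by $0,T,\dots,(k-1)T$. First I would set $Lu:=-u''$ on the domain of $kT$-periodic functions of class $W^{2,1}$ and $\mathcal{N}_{\mu}u:=a_{\mu}(\cdot)\,g(u(\cdot))$, so that the $kT$-periodic solutions of $(\mathscr{E}_{\mu})$ are exactly the solutions of $Lu=\mathcal{N}_{\mu}u$. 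Since only positive solutions are relevant and $g$ is given only on $\mathbb{R}^{+}$, I would replace $g$ by a suitable modification defined on all of $\mathbb{R}$; a maximum-principle-type argument based on $(g_{*})$ and the sign structure in $(q_{*})$ then guarantees that every nontrivial solution of the modified problem is strictly positive, so that working with the modified nonlinearity is legitimate. I also note that for $\mu$ large the mean value $\int_{0}^{T}a_{\mu}(t)\,dt=\int_{0}^{T}a^{+}(t)\,dt-\mu\int_{0}^{T}a^{-}(t)\,dt$ is automatically negative, so $(q_{\#})$ holds for free.

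Next I would record the two qualitative facts that make the degree computable. The superlinearity at infinity in $(g_{s})$ provides an a priori bound $R_{0}$ on the admissible solutions together with a homotopy along which the coincidence degree on the large ball $B(0,R_{0})$ vanishes; the condition $g'(0)=0$ in $(g_{s})$ makes the nonlinear term negligible near the origin, so that on a small ball $B(0,r)$ the only solution is the trivial one and the coincidence degree there equals $1$. Localizing this discrepancy between $0$ and $1$ is what produces nontrivial solutions.

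The core of the argument, and the step I expect to be the main obstacle, is to refine this into a \emph{combinatorial} existence statement: for each subset $\mathcal{S}\subseteq\{1,\dots,mk\}$ of the positive humps I would produce a solution developing a large bump exactly on the humps indexed by $\mathcal{S}$ and staying small on the others. Here the hypothesis $\mu>\mu^{*}$ is indispensable: on each negative interval the equation reads $u''=\mu a^{-}(t)\,g(u)\ge0$, so solutions are convex there, and taking $\mu$ large forces any solution that is not already small at the endpoints of a negative interval to grow beyond $R_{0}$. This \emph{decouples} the behaviour on consecutive positive humps. I would make this quantitative by introducing, for a suitable threshold $r<R_{0}$, the open sets
\begin{equation*}
\Omega_{\mathcal{S}}:=\bigl\{u:\ \|u\|_{I}<r\ \text{on each hump }I\notin\mathcal{S},\ \ r<\|u\|_{I}<R_{0}\ \text{on each hump }I\in\mathcal{S}\bigr\},
\end{equation*}
and computing the coincidence degree of $L-\mathcal{N}_{\mu}$ on each $\Omega_{\mathcal{S}}$ by excision and additivity; the decoupling guarantees that the degree factorizes hump by hump — a factor $1$ for a small hump and a nonzero factor for a large one — and is therefore nonzero. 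Each such nonzero degree yields a solution realizing the pattern $\mathcal{S}$, and the geometric data being periodic, a single threshold $\mu^{*}$ can be chosen to serve all $k\ge2$ simultaneously.

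Finally I would extract a subharmonic of order $k$ by choosing the pattern aperiodically under the shift $u\mapsto u(\cdot+T)$, which cyclically permutes the $k$ blocks of humps and hence acts on patterns as a cyclic shift of $\mathbb{Z}_{k}$. Taking $\mathcal{S}$ to be ``large on all $m$ humps of the first period and small on every hump of the remaining $k-1$ periods'' yields a pattern with trivial stabilizer, so its orbit has full size $k$. If the corresponding solution $u$ were $\ell T$-periodic for some $1\le\ell\le k-1$, its pattern would be invariant under the shift by $\ell$, forcing a large bump also in period $1+\ell\in\{2,\dots,k\}$ and contradicting smallness there; hence $u$ is not $\ell T$-periodic for any $\ell=1,\dots,k-1$ and is a positive subharmonic solution of order $k$. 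This completes the proof for every $\mu>\mu^{*}$ and every integer $k\ge2$.
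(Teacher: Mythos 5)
Your proposal follows essentially the same route as the paper: Mawhin's coincidence degree applied to the $kT$-periodic problem, pattern sets prescribing small/large behaviour on each of the $km$ positive humps (the paper's sets $\Lambda^{\mathcal{I}}$ from Theorem~\ref{th-prel2}, applied on $\mathopen{[}0,kT\mathclose{]}$ in Theorem~\ref{th-sub}), the crucial observation that $\mu^{*}$ can be chosen independently of $k$, and finally the selection of a shift-aperiodic pattern to rule out $\ell T$-periodicity. The only cosmetic difference is your choice of pattern (large on all $m$ humps of the first period, rather than the paper's single large hump corresponding to the string $(1,0,\ldots,0)$); both patterns have trivial stabilizer under the cyclic shift, so the minimality argument is identical.
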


Theorem~\ref{th-main2} ensures the existence of infinitely many subharmonics taking $\mu>0$ sufficiently large. We stress that in the statement we do not assume condition $(q_{\#})$, since it is implicitly satisfied taking $\mu>\mu^{*}$ with
\begin{equation}\label{eq-mud}
\mu^{*} \geq \mu^{\#} := \dfrac{\int_{0}^{T}a^{+}(t)~\!dt}{\int_{0}^{T}a^{-}(t)~\!dt}.
\end{equation}

\medskip

The plan of the paper is the following.
In Section~\ref{section-2} we illustrate two preliminary results concerning existence and multiplicity of positive $T$-periodic solutions to equations $(\mathscr{E})$ and $(\mathscr{E}_{\mu})$, respectively; the proofs of both results are based on coincidence degree theory and are provided in \cite{FeZa-pp2015,FeZa-15ade}.
In Section~\ref{section-3} we give the proof of Theorem~\ref{th-main1}; some remarks on different possible generalizations are also given.
Section~\ref{section-4} is devoted to Theorem~\ref{th-main2}: we prove the existence of infinitely many positive subharmonic solutions when the negative part of the weight is large enough and, in addition, we estimate the number of subharmonics of a given order. Finally, in Section~\ref{section-5} we compare the two main results and present some open questions.
We conclude this paper with Appendix~\ref{appendix-A} where we discuss some basic facts about the coincidence degree defined in open and possibly unbounded sets and we state some lemmas for the computation of the
degree, employed in Section~\ref{section-2}.

\section{Preliminary results: positive $T$-periodic solutions}\label{section-2}

In this section we present two theorems that ensure existence and, respectively, multiplicity of positive $T$-periodic solutions to $(\mathscr{E})$.
We take advantage of a topological approach introduced in \cite{FeZa-pp2015,FeZa-15ade} based on Mawhin's coincidence degree theory (cf.~\cite{GaMa-77,Ma-79,Ma-93}).

\medskip

The first result states the existence of a positive $T$-periodic solution when the mean value condition $(q_{\#})$ holds.
Therefore, this theorem gives an answer to a question raised by Butler in \cite{Bu-76}. In \cite{Bu-76} the author proved that equation
\begin{equation}
u''+q(t)|u|^{p-1}u=0, \quad p>1,
\end{equation}
has infinitely many $T$-periodic solutions and all these solutions oscillate (have arbitrarily large zeros), by assuming that $q(t)$ is a continuous $T$-periodic function
with only isolated zeros and such that
\begin{equation*}
\int_{0}^{T} q(t)~\!dt \geq0.
\end{equation*}
Moreover, underlining that condition $(q_{\#})$ implies the existence of non-oscillatory solutions, Butler raised the question whether there can exist positive \textit{periodic} solutions under hypothesis $(q_{\#})$. This is stated in the following theorem.

\begin{theorem}\label{th-prel1}
Let $q \colon \mathbb{R} \to \mathbb{R}$ be a $T$-periodic locally integrable function satisfying $(q_{\#})$ and $(q_{*})$.
Let $g \in \mathcal{C}^{1}(\mathbb{R}^{+})$ satisfy $(g_{*})$ and $(g_{s})$.
Then there exists at least a positive $T$-periodic solution of equation $({\mathscr{E}})$.
\end{theorem}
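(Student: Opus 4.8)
The plan is to recast the search for a $T$-periodic solution as a coincidence problem and to detect a nontrivial solution through a jump in the coincidence degree between a small and a large ball. I would set $X = \mathcal{C}^{1}_{T}$ and $Z = L^{1}_{T}$ (the spaces of $T$-periodic functions), define the linear operator $Lu := -u''$ on the $T$-periodic $W^{2,1}$ functions, which is Fredholm of index zero with $\ker L$ the constants, and let $N$ be the Nemytskii operator associated with $q(t)\tilde{g}(u)$. Here $\tilde{g}$ is a fixed continuous extension of $g$ to all of $\mathbb{R}$ agreeing with $g$ on $\mathbb{R}^{+}$; the point of passing to $\tilde{g}$ is to make $N$ well defined on $X$ while retaining, through the sign structure $(q_{*})$, that every $T$-periodic solution of the modified equation is nonnegative. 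Note that $(g_{*})$ together with uniqueness for the Cauchy problem (guaranteed by $g \in \mathcal{C}^{1}$) upgrades nonnegativity to positivity: a nontrivial solution $u \geq 0$ cannot vanish at any point $t_{1}$, since $u(t_{1}) = u'(t_{1}) = 0$ would force $u \equiv 0$. Thus any nontrivial solution of $Lu = Nu$ is a genuine positive solution of $(\mathscr{E})$.

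The technical heart, and the step I expect to be the main obstacle, is the a priori bound: I would show that there is $R_{0} > 0$ such that every nonnegative $T$-periodic solution of the perturbed family $u'' + q(t)\tilde{g}(u) + \alpha\, w(t) = 0$, with $\alpha \geq 0$ and $w \succ 0$ a fixed forcing term, is bounded in the $\mathcal{C}^{1}$ norm by $R_{0}$. This is exactly the classical difficulty of superlinear indefinite problems. The mechanism exploits $(q_{*})$ and the superlinearity at infinity in $(g_{s})$: on each positive hump $I_{i}^{+}$ a solution is concave, and if its maximum were too large the superlinear growth $g(s)/s \to +\infty$ would force so much curvature that the solution would be driven down to zero inside $I_{i}^{+}$, contradicting positivity (equivalently, the appearance of a conjugate point). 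Quantifying this to obtain a bound uniform in $\alpha$ is the delicate part.

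Granting the a priori bound, the two degree computations follow the standard Mawhin scheme, using the abstract degree lemmas recalled in Appendix~\ref{appendix-A}. For the large ball I would use the forcing homotopy above: the a priori bound gives no solution on $\partial B_{R_{0}}$ for any $\alpha \geq 0$, while integrating the perturbed equation over a period yields $\alpha \int_{0}^{T} w(t)\,dt = -\int_{0}^{T} q(t)\tilde{g}(u(t))\,dt$, whose right-hand side is bounded by the a priori estimate; hence no solution exists at all once $\alpha$ is large. By homotopy invariance this forces $D_{L}(L - N, B_{R_{0}}) = 0$. For the small ball I would use $(g_{s})$ in the form $\tilde{g}(s) = o(|s|)$ as $s \to 0$: for $r$ small the continuation $Lu = \vartheta N u$, $\vartheta \in (0,1]$, admits no nontrivial solution on $\partial B_{r}$, so the trivial solution is isolated, and the corresponding degree lemma yields $D_{L}(L - N, B_{r}) = 1$.

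Finally, by additivity of the coincidence degree, $D_{L}(L - N, B_{R_{0}} \setminus \overline{B_{r}}) = 0 - 1 = -1 \neq 0$, so the modified equation has a solution $u$ with $r < \|u\| < R_{0}$. This solution is nontrivial, hence positive by the argument of the first paragraph, and therefore it is a positive $T$-periodic solution of $(\mathscr{E})$, as claimed. The role played by $(q_{\#})$ is twofold: it is the necessary mean-value condition without which no positive periodic solution can exist, and, since the kernel-reduced map on constants $c > 0$ is a positive multiple of $\int_{0}^{T} q(t)\,dt$, it fixes the sign underlying the small-ball normalization.
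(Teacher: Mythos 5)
Your overall route is exactly the paper's: the same coincidence-degree setting, degree $1$ on a small ball (continuation in $\vartheta$ plus the kernel computation where $(q_{\#})$ enters, i.e.\ Lemma~\ref{lemma_Mawhin}), degree $0$ on a large ball (superlinear a priori bound plus a positive forcing term, i.e.\ Lemma~\ref{lem-abs-deg0}), and additivity. However, your first step contains a genuine gap that propagates through everything else. You modify the equation by extending $g$ alone to a continuous $\tilde g$ on $\mathbb{R}$, keeping the nonlinearity in product form $q(t)\tilde g(s)$, and you claim that ``through the sign structure $(q_{*})$'' every $T$-periodic solution of the modified equation is nonnegative. The sign structure of $q$ cannot give you this; on the contrary, it is precisely because $q$ changes sign that no product-form extension yields a maximum principle. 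Concretely: if $\tilde g$ vanishes on $\mathopen{]}-\infty,0\mathclose{[}$ (extension by zero), every negative constant is a $T$-periodic solution, and these constants also solve every equation $Lu=\vartheta Nu$, so they sit on $\partial B_{r}$ and destroy the small-ball computation; if $\tilde g$ has a strict negative sign there (e.g.\ the odd extension $\tilde g(s)=-g(-s)$), then $v:=-u$ transforms the modified equation into an equation of the same type, so negative periodic solutions exist exactly when the positive solutions you are trying to produce exist; and with a positive-sign extension the modified equation is itself superlinear indefinite, whose sign-changing periodic solutions (the Butler-type oscillatory solutions recalled in the introduction) cannot be excluded a priori. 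In all cases the nontrivial solution detected by the degree jump need not be nonnegative, so your final implication ``nontrivial $\Rightarrow$ positive'' collapses.

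The fix is the paper's Step~1, and it is not cosmetic: one truncates the \emph{full} nonlinearity rather than $g$, setting $f(t,s):=-s$ for $s\le 0$ and $f(t,s):=q(t)g(s)$ for $s\ge 0$, so that the behaviour for $s<0$ is decoupled from $q$. Then wherever a $T$-periodic solution is negative it satisfies $u''=u<0$, hence it is concave on each maximal negativity interval with zero boundary values (or concave everywhere and periodic), which is impossible; thus all periodic solutions are nonnegative, and nontrivial ones are positive by your Cauchy-uniqueness argument. Crucially, this maximum principle is stable under both homotopies you need: $u''+\vartheta f(t,u)=0$ gives $u''=\vartheta u<0$ where $u<0$, and $u''+f(t,u)+\alpha w(t)=0$ with $w\ge 0$ gives $u''=u-\alpha w\le u<0$ there. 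This is what legitimately reduces both boundary-exclusion statements to \emph{nonnegative} solutions only --- which is exactly the form in which your a priori bound and your small-solution exclusion are stated. With this replacement (and noting that on $\ker L$ one then has $QN(c)=-c\neq 0$ for $c<0$, while $(q_{\#})$ fixes the sign for $c>0$, giving Brouwer degree $1$), your argument coincides with the paper's Steps~2--4.
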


\begin{proof}
We give only a sketch of the proof, describing the main steps (which are developed in details in \cite{FeZa-15ade}).

\smallskip
\noindent
\textit{Step~1. Mawhin's coincidence degree setting. }
First of all, using a standard procedure, we define the $L^{1}$-Carath\'{e}odory function $f\colon\mathbb{R}^{2}\to\mathbb{R}$ as
\begin{equation*}
f(t,s) :=
\begin{cases}
\, -s, & \text{if } s \leq 0;\\
\, q(t)g(s), & \text{if } s\geq 0;
\end{cases}
\end{equation*}
and we deal with the $T$-periodic problem associated with
\begin{equation}\label{eq-f}
u'' + f(t,u) = 0.
\end{equation}
Via a standard maximum principle, one can prove that every $T$-periodic solution $u(t)$ of \eqref{eq-f} is non-negative and if $u(t)$ is a $T$-periodic solution of \eqref{eq-f} with $u\not\equiv0$, then $u(t)>0$ for all $t\in\mathbb{R}$.

Secondly, we write the $T$-periodic problem associated with \eqref{eq-f} as a \textit{coincidence equation}
\begin{equation}\label{coinc-eq}
Lu = Nu,\quad u\in \text{\rm dom}\,L.
\end{equation}
Taking into account that solving the $T$-periodic periodic problem associated with \eqref{eq-f} is equivalent to solving equation \eqref{eq-f} on $\mathopen{[}0,T\mathclose{]}$ together with the periodic boundary condition $u(0)=u(T)$ and $u'(0)=u'(T)$, we set $X:=\mathcal{C}(\mathopen{[}0,T\mathclose{]})$, the Banach space of continuous functions $u \colon \mathopen{[}0,T\mathclose{]} \to \mathbb{R}$
endowed with the $\sup$-norm $\|u\|_{\infty} := \max_{t\in \mathopen{[}0,T\mathclose{]}} |u(t)|$,
and $Z:=L^{1}(\mathopen{[}0,T\mathclose{]})$, the Banach space of integrable functions $v \colon \mathopen{[}0,T\mathclose{]} \to \mathbb{R}$ endowed with the norm $\|v\|_{L^{1}}:= \int_{0}^{T} |v(t)|~\!dt$.
Next, on $\text{\rm dom}\,L := \{u\in W^{2,1}(\mathopen{[}0,T\mathclose{]}) \colon u(0) = u(T), \, u'(0) = u'(T) \} \subseteq X$ we define the differential operator
\begin{equation*}
L \colon u \mapsto - u'',
\end{equation*}
which is a linear Fredholm map of index zero. Moreover, in order to enter the coincidence degree setting, we introduce the projectors
$P \colon X \to \ker L \cong {\mathbb{R}}$, $Q \colon Z \to \text{\rm coker}\,L \cong Z/\text{\rm Im}\,L \cong \mathbb{R}$,
the right inverse $K_{P} \colon \text{\rm Im}\,L \to \text{\rm dom}\,L \cap \ker P$ of $L$,
and the orientation-preserving isomorphism $J \colon \text{\rm coker}\,L \to \ker L$.
For the standard definition of these operators we refer to \cite[\S~2]{BoFeZa-16prse} and to \cite[\S~2]{FeZa-15ade}.
Finally, let $N \colon X \to Z$ be the Nemytskii operator induced by the nonlinear function $f(t,s)$, that is
\begin{equation*}
(N u)(t):= f(t,u(t)), \quad t\in \mathopen{[}0,T\mathclose{]}.
\end{equation*}

\smallskip

With this position, now we show how to reach the thesis using a topological approach based on Mawhin's coincidence degree. We refer to \cite{GaMa-77,Ma-79,Ma-93} for the classical definition and properties of the \textit{coincidence degree} $D_{L}(L-N,\Omega)$ \textit{of $L$ and $N$ in $\Omega$}, where $\Omega\subseteq X$ is an open and bounded set (cf.~also Appendix~\ref{appendix-A}).

\smallskip
\noindent
\textit{Step~2. Degree on a small ball. }
Since $g'(0)=0$, we can fix a (small) constant $r>0$ such that the following property holds.
\begin{itemize}
\item
If $\vartheta\in \mathopen{]}0,1\mathclose{]}$ and $u(t)$ is any non-negative $T$-periodic solution of
\begin{equation*}
u'' + \vartheta q(t) g(u) = 0, \\
\end{equation*}
then $\|u\|_{\infty}\neq r$.
\end{itemize}
Then, using condition $(q_{\#})$, by Lemma~\ref{lemma_Mawhin} we obtain that
\begin{equation}\label{eq-degr}
D_{L}(L-N,B(0,r)) = 1.
\end{equation}

\smallskip
\noindent
\textit{Step~3. Degree on a large ball. }
Since $g(s)/s\to+\infty$ as $s\to+\infty$, we can fix a (large) constant $R > 0$ (with $R>r$) such that the following property holds.
\begin{itemize}
\item
There exist a non-negative function $v\in L_{T}^{1}$ with $v\not\equiv 0$
and a constant $\nu_{0} > 0$, such that every non-negative $T$-periodic solution $u(t)$ of
\begin{equation}\label{eq-lem}
u'' + q(t) g(u) + \nu v(t) = 0,
\end{equation}
for $\nu \in \mathopen{[}0,\nu_{0}\mathclose{]}$, satisfies $\|u\|_{\infty} \neq R$.
Moreover, there are no $T$-periodic solutions $u(t)$ of \eqref{eq-lem} for $\nu = \nu_{0}$ with $0 \leq u(t) \leq R$,
for all $t\in \mathbb{R}$.
\end{itemize}
Then, by Lemma~\ref{lem-abs-deg0} we derive that
\begin{equation}\label{eq-degR}
D_{L}(L-N,B(0,R)) = 0.
\end{equation}

\smallskip
\noindent
\textit{Step~4. Conclusion. }
From \eqref{eq-degr} and \eqref{eq-degR}, using the additivity property of Mawhin's coincidence degree, we derive that
\begin{equation*}
D_{L}( L-N, B(0,R) \setminus \overline{B(0,r)})=-1.
\end{equation*}
Then, by the existence property of the degree, there exists at least a nontrivial solution $u^{*}(t)$ of \eqref{coinc-eq} with $r<\|u^{*}\|_\infty < R$.
Via a standard maximum principle, we conclude that $u^{*}(t)$ is a positive $T$-periodic solution of \eqref{eq-f} and thus of $(\mathscr{E})$.
The theorem follows.
\end{proof}

\medskip

Now, we continue the investigation on Butler's open problem, by providing multiple positive $T$-periodic solutions to $(\mathscr{E})$ (depending on the nodal properties of the weight $q(t)$) when the negative part of $q(t)$ is sufficiently large. Accordingly, for this second part of the section, we deal with the parameter-dependent equation 
\begin{equation*}
u'' + \bigl{(}a^{+}(t) - \mu a^{-}(t)\bigr{)} g(u) = 0,
\leqno{(\mathscr{E}_{\mu})}
\end{equation*}
where $\mu>0$, and we prove our result of multiplicity when $\mu$ is large enough.
In the sequel, when dealing with the function $a(t)$, we denote with $(a_{*})$ the hypothesis about the existence of $m$ intervals where $a(t)\succ0$ separated by $m$ intervals where $a(t)\prec0$ (in $\mathopen{[}0,T\mathclose{]}$), which is the analogous of condition $(q_{*})$ referring to $q(t)$.

The following theorem holds (see also Figure~\ref{fig-00}).

\begin{theorem}\label{th-prel2}
Let $a \colon \mathbb{R} \to \mathbb{R}$ be a $T$-periodic locally integrable function satisfying $(a_{*})$.
Let $g \in \mathcal{C}^{1}(\mathbb{R}^{+})$ satisfy $(g_{*})$ and $(g_{s})$.
Then there exists $\mu^{*}>0$ such that for all $\mu>\mu^{*}$ equation $(\mathscr{E}_{\mu})$ has at least $2^{m}-1$ positive $T$-periodic solutions.
\end{theorem}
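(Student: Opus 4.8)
The plan is to work inside the same Mawhin coincidence degree setting built in the proof of Theorem~\ref{th-prel1}: I truncate $g$ for negative arguments to obtain an $L^{1}$-Carath\'{e}odory field, set $X=\mathcal{C}(\mathopen{[}0,T\mathclose{]})$ and $Z=L^{1}(\mathopen{[}0,T\mathclose{]})$, let $L\colon u\mapsto -u''$ on the periodic domain, and write the $T$-periodic problem for $(\mathscr{E}_{\mu})$ as a coincidence equation $Lu=N_{\mu}u$, where $N_{\mu}$ is the Nemytskii operator of the truncated $\mu$-dependent field. As before, the maximum principle forces every nontrivial $T$-periodic solution to be strictly positive, so it suffices to produce $2^{m}-1$ distinct nontrivial fixed points. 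The superlinearity hypotheses $(g_{*})$ and $(g_{s})$ supply, exactly as in Steps~2--3 of the previous proof, two thresholds $0<r<R$ which will play the role of a ``small'' and a ``large'' regime on each positive hump.

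Next I would localize solutions hump by hump. Writing $\|u\|_{i}:=\max_{t\in I^{+}_{i}}u(t)$ for the sup of $u$ over the $i$-th positive interval, I associate to every subset $\mathcal{I}\subseteq\{1,\dots,m\}$ the open set
\[
\Omega_{\mathcal{I}}:=\bigl\{\, u\in X : \|u\|_{i}<r \ \text{for } i\notin\mathcal{I}, \ \ r<\|u\|_{i}<R \ \text{for } i\in\mathcal{I} \,\bigr\},
\]
collecting the functions that remain small on the humps outside $\mathcal{I}$ and oscillate in the intermediate annular regime on those of $\mathcal{I}$. For distinct subsets these sets are pairwise disjoint, and $\Omega_{\emptyset}$ is the only one containing the trivial solution; hence a nontrivial solution inside $\Omega_{\mathcal{I}}$ for each of the $2^{m}-1$ nonempty $\mathcal{I}$ yields the asserted count, the solutions being automatically distinct (they differ in which humps carry a bump) and strictly positive by the maximum principle.

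The decisive step is to evaluate $D_{L}(L-N_{\mu},\Omega_{\mathcal{I}})$ and to show it is nonzero once $\mu$ is large. This is where the largeness of the negative part is exploited, through a decoupling estimate: on each negative interval $I^{-}_{j}$ a positive solution obeys $u''=\mu a^{-}(t)g(u)\geq0$, so it is convex there, and if such a solution stayed bounded below by a fixed positive constant on the whole interval its second derivative would be of order $\mu$, forcing it to exceed $R$ over the fixed-length interval; thus for $\mu$ large every solution bounded by $R$ must come close to zero somewhere on each $I^{-}_{j}$. Consequently the problem effectively splits into $m$ independent hump problems, no solution can lie on $\partial\Omega_{\mathcal{I}}$ (the admissibility required for the degree), and the degree factorizes over the humps: a factor $1$ from each small hump (via Lemma~\ref{lemma_Mawhin}, as in \eqref{eq-degr}) and a factor $-1$ from each annular hump (the difference $0-1$ obtained through Lemma~\ref{lem-abs-deg0}, as in the conclusion of Theorem~\ref{th-prel1}). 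This gives $D_{L}(L-N_{\mu},\Omega_{\mathcal{I}})=(-1)^{|\mathcal{I}|}\neq0$; a consistency check is that $\sum_{\mathcal{I}}(-1)^{|\mathcal{I}|}=(1-1)^{m}=0$ matches the vanishing degree on the full ball $\{\|u\|_{i}<R \ \forall i\}$, while the $\mathcal{I}=\emptyset$ term recovers the degree $1$ near the origin. The existence property of the coincidence degree then furnishes one solution in each $\Omega_{\mathcal{I}}$.

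I expect the main obstacle to be precisely this decoupling-and-factorization step. One must make rigorous both the a priori exclusion of solutions from the boundaries $\|u\|_{i}=r$ and $\|u\|_{i}=R$ and the multiplicative structure of the degree on the product-like set $\Omega_{\mathcal{I}}$; quantifying how large $\mu^{*}$ must be to guarantee the collapse toward zero across every negative interval \emph{uniformly} over all $2^{m}-1$ sets simultaneously is the technical crux, and it is exactly where the assumption that the negative part of the weight be sufficiently large enters essentially.
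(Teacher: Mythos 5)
Your overall architecture matches the paper's: the truncation and coincidence setting, the thresholds $0<r<R$ inherited from the proof of Theorem~\ref{th-prel1}, the hump-wise localization (your sets $\Omega_{\mathcal{I}}$ are exactly the paper's sets $\Lambda^{\mathcal{I}}$ in \eqref{eq-lambda}), the disjointness count giving $2^{m}-1$ solutions, and the maximum principle at the end. The gap is the step you yourself flag as the crux: you assert that $D_{L}(L-N,\Omega_{\mathcal{I}})$ ``factorizes over the humps'' into a product of a factor $1$ for each small hump and $-1$ for each annular hump. Coincidence degree has no such product formula in this situation: $\Omega_{\mathcal{I}}$ is not a topological product of problems posed on the separate humps (the functions remain coupled across the intervals of negativity), and neither Lemma~\ref{lemma_Mawhin} nor Lemma~\ref{lem-abs-deg0} can be invoked ``one hump at a time.'' Your convexity estimate on the $I^{-}_{j}$ (solutions bounded by $R$ must approach zero there when $\mu$ is large) is indeed part of how the largeness of $\mu$ is exploited, but it yields a priori localization, not a degree computation; as written, the central equality $D_{L}(L-N,\Omega_{\mathcal{I}})=(-1)^{\#\mathcal{I}}$ is unproven.

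The paper closes exactly this gap with a two-family scheme that avoids any multiplicativity. Besides the sets $\Lambda^{\mathcal{I}}$ it introduces the larger ``box'' sets $\Omega^{\mathcal{I}}$ (maximum $<R$ on the humps in $\mathcal{I}$, $<r$ on the others) and computes their degrees first: $D_{L}(L-N,\Omega^{\emptyset})=1$ by a convexity/excision reduction to \eqref{eq-degr}, and, for $\mu>\mu^{*}$, $D_{L}(L-N,\Omega^{\mathcal{I}})=0$ for every nonempty $\mathcal{I}$ via the homotopy Lemma~\ref{lem-deg0-deFigueiredo} --- this is where the largeness of $\mu$ actually enters the degree theory. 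Then, since every solution in $\Omega^{\mathcal{I}}$ must lie in some $\Lambda^{\mathcal{J}}$ with $\mathcal{J}\subseteq\mathcal{I}$ (boundary exclusion at the levels $r$ and $R$), additivity gives $D_{L}(L-N,\Omega^{\mathcal{I}})=\sum_{\mathcal{J}\subseteq\mathcal{I}}D_{L}(L-N,\Lambda^{\mathcal{J}})$, and an induction on $\#\mathcal{I}$ (an inclusion-exclusion inversion) yields $D_{L}(L-N,\Lambda^{\mathcal{I}})=(-1)^{\#\mathcal{I}}$; your consistency check $\sum_{\mathcal{J}\subseteq\mathcal{I}}(-1)^{\#\mathcal{J}}=0$ is precisely the identity driving that induction, so your heuristic answer is correct but the derivation must go through the auxiliary family $\Omega^{\mathcal{I}}$. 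A second, smaller omission: your sets are unbounded in $X$ (nothing constrains $u$ off the positive humps), so even defining $D_{L}$ on them requires the extension of the coincidence degree to open, possibly unbounded sets (Appendix~\ref{appendix-A}), whose admissibility --- compactness of the solution set in each $\Omega_{\mathcal{I}}$ --- is what your a priori bounds must be shown to guarantee.
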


\begin{proof}
We give only a sketch of the proof, describing the main steps (which are developed in details in \cite{FeZa-pp2015}).

\smallskip
\noindent
\textit{Step~1. Notation and Mawhin's coincidence degree setting. }
For technical reasons, without loss of generality and consistently with assumption $(a_{*})$, we can select the endpoints of each interval of positivity $I^{+}_{i} = \mathopen{[}\sigma,\tau\mathclose{]}$ in such a manner that $a(t)\not\equiv 0$ on all left neighborhoods of $\sigma$ and on all right neighborhoods of $\tau$. In \cite{BoFeZa-17tams} it is showed, in a different context, how this additional technical hypothesis can be avoided.

First of all, we fix the constants $r$ and $R$, with $0<r<R$, as in \textit{Step~2} and \textit{Step~3} of the proof of Theorem~\ref{th-prel1}.
Next, for every subset of indices $\mathcal{I}\subseteq\{1,\ldots,m\}$ (possibly empty), we define two open unbounded sets
\begin{equation*}
\begin{aligned}
\Omega^{\mathcal{I}}:=
\biggl\{\,u\in\mathcal{C}(\mathopen{[}0,T\mathclose{]})\colon
   & \max_{t\in I^{+}_{i}}|u(t)|<R, \, i\in\mathcal{I};                             &
\\ & \max_{t\in I^{+}_{i}}|u(t)|<r, \, i\in\{1,\ldots,m\}\setminus\mathcal{I}       & \biggr\}
\end{aligned}
\end{equation*}
and
\begin{equation}\label{eq-lambda}
\begin{aligned}
\Lambda^{\mathcal{I}}:=
\biggl\{\,u\in\mathcal{C}(\mathopen{[}0,T\mathclose{]})\colon
   & r < \max_{t\in I^{+}_{i}}|u(t)|<R, \, i\in\mathcal{I};                           &
\\ & \max_{t\in I^{+}_{i}}|u(t)|<r, \, i\in\{1,\ldots,m\}\setminus\mathcal{I}     & \biggr\}.
\end{aligned}
\end{equation}

Now we enter the setting of Mawhin's coincidence degree in the same manner as we have done in \textit{Step~1} of the proof of Theorem~\ref{th-prel1}. Our goal is to compute the degree on the sets $\Omega^{\mathcal{I}}$ and $\Lambda^{\mathcal{I}}$, in particular we are going to prove that
\begin{equation*}
D_{L}(L-N,\Lambda^{\mathcal{I}})\neq0, \quad \forall \, \mathcal{I}\subseteq\{1,\ldots,m\}.
\end{equation*}

Since the sets $\Omega^{\mathcal{I}}$ and $\Lambda^{\mathcal{I}}$ are open and \textit{unbounded}, we use the more general version of the coincidence degree for locally compact operators on open and possibly unbounded
sets (see Appendix~\ref{appendix-A} and \cite[Appendix~B]{Fe-16thesis}).

\smallskip
\noindent
\textit{Step~2. Degree on $\Omega^{\mathcal{I}}$. }
For any subset of indices $\mathcal{I}\subseteq\{1,\ldots,m\}$ we compute $D_{L}(L-N,\Omega^{\mathcal{I}})$.
First of all, we consider the case $\mathcal{I}=\emptyset$. As a consequence of a convexity argument, the excision property of the degree and formula \eqref{eq-degr}, we obtain that
\begin{equation}\label{deg-emptyset}
D_{L}(L-N,\Omega^{\emptyset}) = D_{L}(L-N,B(0,r))=1.
\end{equation}
Secondly, let us consider a subset $\mathcal{I}\neq\emptyset$. Via a homotopic argument (by Lemma~\ref{lem-deg0-deFigueiredo}), we can prove the existence of $\mu^{*}\geq\mu^{\#}>0$ (where $\mu^{\#}$ is the constant defined in \eqref{eq-mud}) such that for all $\mu>\mu^{*}$ the degree $D_{L}(L-N,\Omega^{\mathcal{I}})$ is well-defined and the following formula holds
\begin{equation}\label{deg-0}
D_{L}(L-N,\Omega^{\mathcal{I}})=0, \quad \forall \, \emptyset\neq\mathcal{I}\subseteq\{1,\ldots,m\}.
\end{equation}

\smallskip
\noindent
\textit{Step~3. Degree on $\Lambda^{\mathcal{I}}$. }
For any subset of indices $\mathcal{I}\subseteq\{1,\ldots,m\}$ we compute $D_{L}(L-N,\Lambda^{\mathcal{I}})$.
Via a purely combinatorial argument (done by induction on the cardinality $\#\mathcal{I}$ of the set $\mathcal{I}$), from \eqref{deg-emptyset} and \eqref{deg-0}, we deduce that for all $\mu>\mu^{*}$ the degree $D_{L}(L-N,\Lambda^{\mathcal{I}})$ is well-defined and the following formula holds
\begin{equation*}
D_{L}(L-N,\Lambda^{\mathcal{I}})=(-1)^{\#\mathcal{I}}, \quad \forall \, \mathcal{I}\subseteq\{1,\ldots,m\}.
\end{equation*}

\smallskip
\noindent
\textit{Step~4. Conclusion. }
Preliminarily, we underline that $0\notin\Lambda^{\mathcal{I}}$ for all $\emptyset\neq\mathcal{I}\subseteq\{1,\ldots,m\}$
and the sets $\Lambda^{\mathcal{I}}$ are pairwise disjoint. Since the number of nonempty subsets of a set with $m$ elements is $2^{m}-1$, there are 
$2^{m}-1$ sets $\Lambda^{\mathcal{I}}$ not containing the null function.
Since, from \textit{Step~3}, in particular we have that
\begin{equation*}
D_{L}(L-N,\Lambda^{\mathcal{I}})\neq0, \quad \forall \, \mathcal{I}\subseteq\{1,\ldots,m\},
\end{equation*}
we conclude that there exist at least $2^{m}-1$ nontrivial solution of \eqref{coinc-eq}.
Via a standard maximum principle, we conclude that these nontrivial functions are positive $T$-periodic solutions of \eqref{eq-f} and thus of $(\mathscr{E}_{\mu})$.
The theorem follows.
\end{proof}

\begin{figure}[h!]
\centering
\begin{tikzpicture} [scale=1]
\node at (-3,0) {\includegraphics[width=0.45\textwidth]{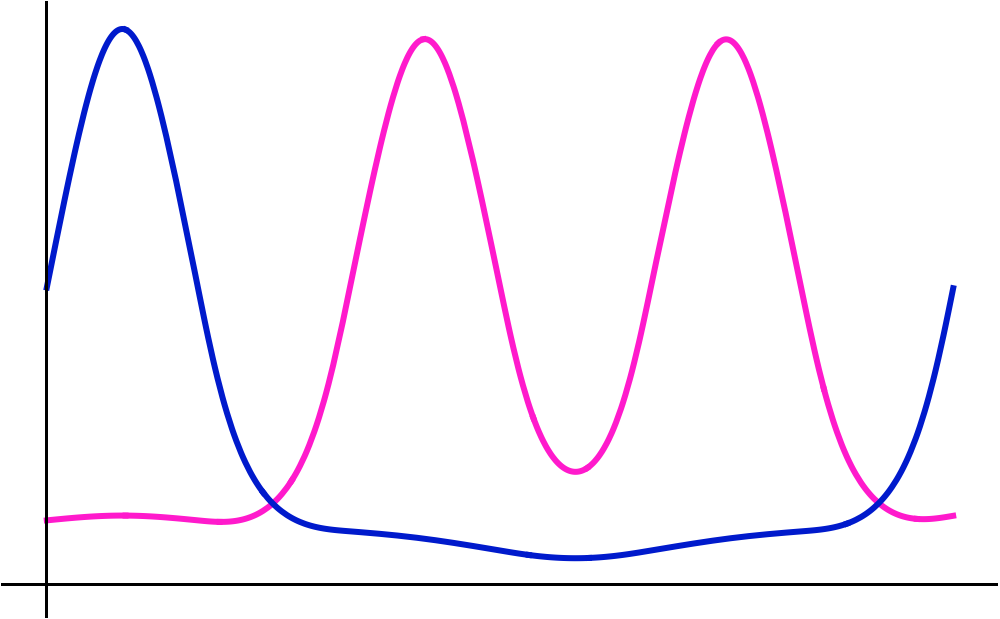}};
\node at (3,0) {\includegraphics[width=0.45\textwidth]{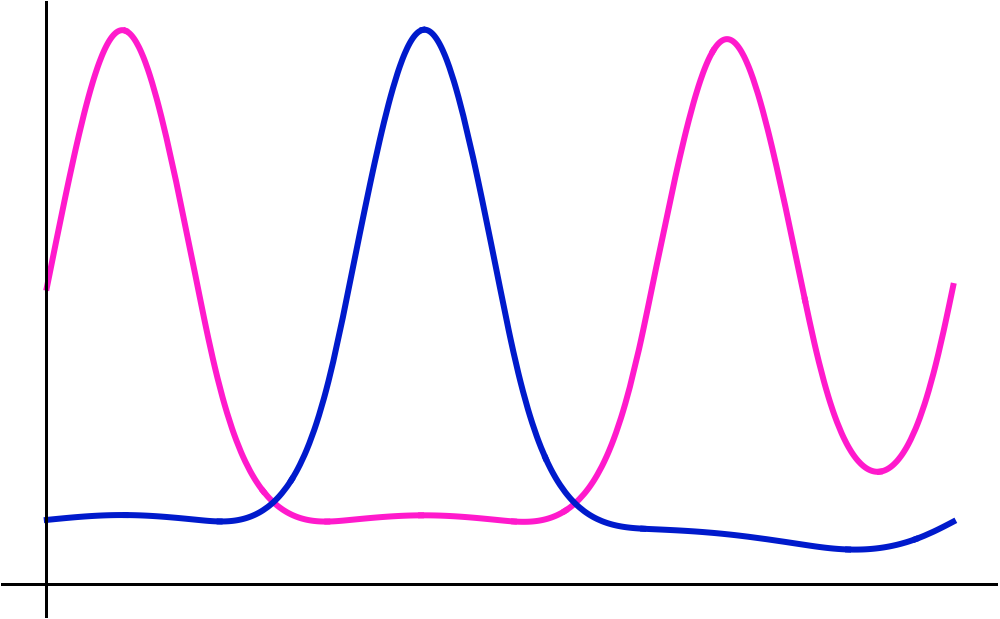}};
\node at (-3,-4) {\includegraphics[width=0.45\textwidth]{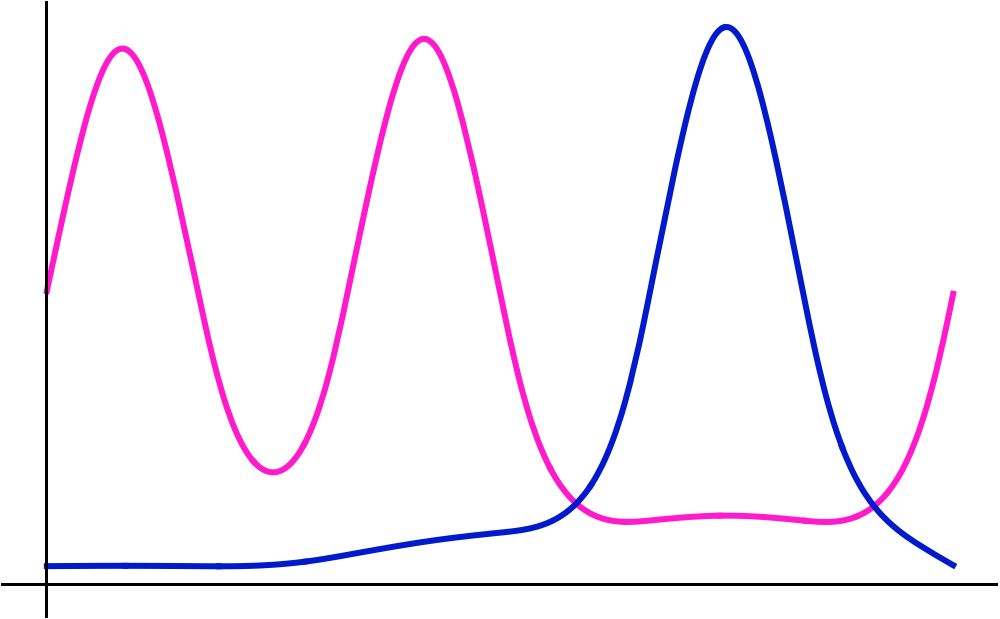}};
\node at (3,-4) {\includegraphics[width=0.45\textwidth]{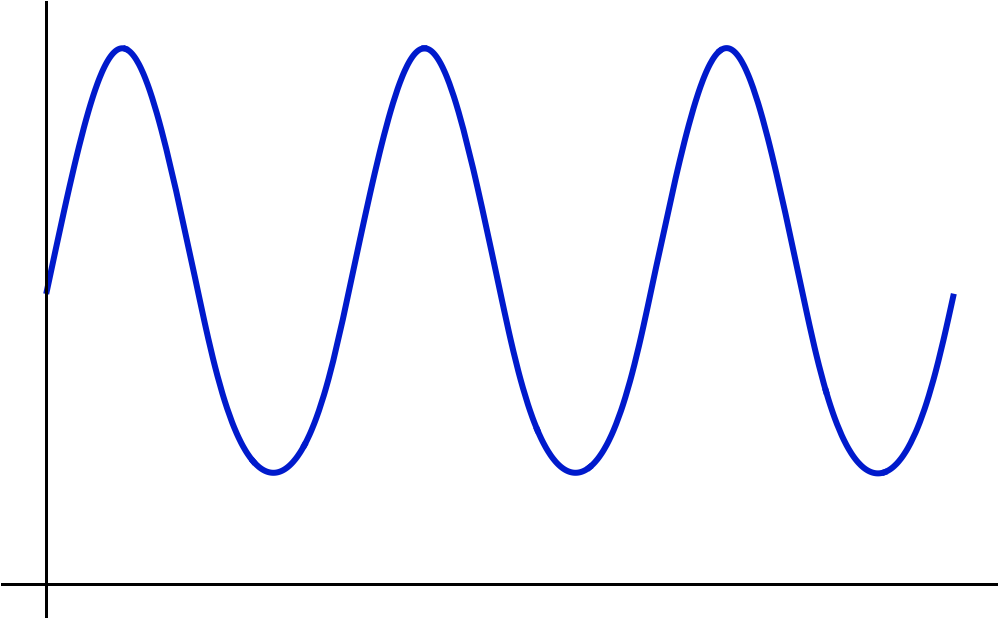}};
\end{tikzpicture}
\caption{The figure shows an example of multiple positive solutions for the $T$-periodic boundary value problem associated with $(\mathscr{E}_{\mu})$.
For this numerical simulation we have chosen $a(t) = \sin(6\pi t)$ for $t\in\mathopen{[}0,1\mathclose{]}$, $\mu = 10$ and $g(s) = \max\{0,400\, s\arctan|s|\}$. Notice that the weight function $a(t)$ has $3$ positive humps.
We show the graphs of the $7$ positive $T$-periodic solutions of $(\mathscr{E}_{\mu})$. We stress that $g(s)/s\not\to+\infty$ as $s\to+\infty$ (contrary to what is assumed in $(g_{s})$), indeed Theorem~\ref{th-prel2} is also valid assuming only that $g(s)/s$ is sufficiently large as $s\to+\infty$, as observed in Remark~\ref{rem-2.1}.
}
\label{fig-00}
\end{figure}

\begin{remark}\label{rem-2.1}
We underline that one can employ a similar topological approach to obtain existence and multiplicity results (analogous to Theorem~\ref{th-prel1} and Theorem~\ref{th-prel2}, respectively) dealing with an equation involving also a friction term as
\begin{equation*}
u'' + cu' + \bigl{(}a^{+}(t) - \mu a^{-}(t)\bigr{)} g(u) = 0,
\leqno{(\mathscr{E}_{\mu,c})}
\end{equation*}
where $c\in\mathbb{R}$ is an arbitrary constant.
In fact, in \cite{FeZa-pp2015} it is showed how suitable monotonicity properties of the map $t\mapsto e^{ct}u'(t)$ replace the convexity/concavity
of the solutions of $(\mathscr{E}_{\mu})$. This is the crucial point that allows to adapt the topological approach described above in this more general setting.
However, even if both main results about subharmonic solutions, that will be proved in the following sections, are based on these existence/multiplicity theorems, only one is valid in the non-variational context (cf.~also Remark~\ref{rem-4.1}) and thus we prefer to skip the details of these general results.

Furthermore, we observe that Theorem~\ref{th-prel1} and Theorem~\ref{th-prel2} can be proved under less restrictive regularity hypotheses on $g(s)$.
Concerning Theorem~\ref{th-prel1}, the original version in \cite{FeZa-15ade} was stated and proved assuming $g(s)$ only continuous on $\mathbb{R}^{+}$ and continuously differentiable on a right neighborhood of zero (cf.~\cite[Theorem~3.2]{FeZa-15ade}), or assuming $g(s)$ continuous on $\mathbb{R}^{+}$ and regularly oscillating at zero (cf.~\cite[Theorem~3.1]{FeZa-15ade} and the references listed at the end of \cite[\S~1]{FeZa-15ade}).
In fact, in Theorem~\ref{th-prel1} we assume $g(s)$ of class $\mathcal{C}^{1}$ but we stress that in the proof only the continuity of $g(s)$ and its continuous differentiability near zero is used (see the details of the proof in \cite{FeZa-15ade}).

In this perspective, also Theorem~\ref{th-prel2} is valid under weaker hypotheses and, in fact, in  \cite{FeZa-pp2015} the authors proved it assuming only that $g\in\mathcal{C}(\mathbb{R}^{+})$ (with $\lim_{s\to0^{+}}g(s)/s\to0$ instead of $g'(0)=0$) and thus are able to avoid all the additional regularity conditions on $g(s)$ (by taking $\mu$ large enough).

It is worth noting that, dealing with equation $(\mathscr{E}_{\mu})$, the more general version of Theorem~\ref{th-prel1} in \cite{FeZa-15ade} ensures the existence of a positive $T$-periodic solution assuming $\mu>\mu^{\#}$ (with $\mu^{\#}$ defined in \eqref{eq-mud}), the continuity of $g(s)$ and some regularity conditions near $s=0$, while in the more general version of Theorem~\ref{th-prel2} in \cite{FeZa-pp2015} we obtain multiplicity of positive $T$-periodic solutions assuming $\mu$ large (that is $\mu>\mu^{*}\geq\mu^{\#}$) and the continuity of $g(s)$. We stress that in the second result in order to avoid additional regularity assumptions on the nonlinearity $g(s)$, we need a stronger hypothesis on the weight (namely take $\mu$ large) and thus a worse estimate on $\mu^{*}$.

Finally, we remark that in both theorems the growth condition at infinity can be weakened by dealing with nonlinearities $g(s)$ satisfying the less restrictive assumption at infinity
\begin{equation*}
\liminf_{s\to +\infty} \dfrac{g(s)}{s} > \max_{i=1,\ldots,m}\lambda_{1}^{i},
\end{equation*}
where $\lambda_{1}^{i}$ ($i=1,\ldots,m$) is the first eigenvalue of the eigenvalue problem on $I^{+}_{i}$
\begin{equation*}
\varphi'' + \lambda \, q(t) \varphi = 0, \quad \varphi|_{\partial I^{+}_{i}} = 0,
\end{equation*}
(cf.~hypothesis $(q_{*})$). We refer to \cite{FeZa-pp2015,FeZa-15ade} for the details (see also the example described in Figure~\ref{fig-00}).
However, we avoid investigations in this direction since our main result presented in Section~\ref{section-3} needs that $g(s)$ is of class $\mathcal{C}^{2}$ and satisfying $(g_{s})$.
$\hfill\lhd$
\end{remark}

\section{First result: symplectic approach}\label{section-3}

In this section we present a symplectic approach, based on the Poincar\'{e}-Birkhoff fixed point theorem, which allows to obtain infinitely many subharmonics to
\begin{equation*}
u'' + q(t)  g(u) = 0,
\leqno{(\mathscr{E})}
\end{equation*}
as stated in Theorem~\ref{th-main1}. We refer to \cite{BoFe-16} for the missing details.

\medskip

As a first step, analogously as in \textit{Step~3} of the proof of Theorem~\ref{th-prel1}, we prove the existence of $R>0$ (large enough) such that 
for every $\nu\geq0$ and for every integer $k\geq 1$ any $kT$-periodic solution $u(t)$ to
\begin{equation*}
u'' + q(t)g(u) + \nu \mathbbm{1}_{\bigcup_{i=1}^{m} I^{+}_{i}}(t) = 0
\end{equation*}
satisfies $\,\max_{t\in \mathbb{R}} u(t) <R$ (where $\mathbbm{1}_{J}$ denotes the indicator function of the interval $J$).
For simplicity, we can choose $R>0$ satisfying both the above property and the one in \textit{Step~3} of the proof of Theorem~\ref{th-prel1}.

As a second step, we introduce the extension $h \colon \mathbb{R}^{2} \to \mathbb{R}$ defined as
\begin{equation*}
h(t,s):=
\begin{cases}
\, 0, & \text{if } s\in \mathopen{]}-\infty, 0\mathclose{[}; \\
\, q(t)g(s), & \text{if } s\in \mathopen{[}0,R\mathclose{]}; \\
\, q(t)g(R) + q(t)g'(R)(s-R), & \text{if } s\in \mathopen{]}R,+\infty\mathclose{[};
\end{cases}
\end{equation*}
which is $T$-periodic in the first variable.
Then, we consider the differential equation
\begin{equation}\label{eqsub}
u'' + h(t,u) = 0.
\end{equation}
We observe that the global continuability for the solutions to \eqref{eqsub} is guaranteed, since the map $s\mapsto h(t,s)$ has linear growth at infinity.

Finally, we introduce the following notation. For any $q \in L^{1}(\mathopen{[}0,T\mathclose{]})$, we denote by $\lambda_{0}(q)$ the principal eigenvalue of the linear problem
\begin{equation}\label{eqhill}
v'' + \bigl{(}\lambda + q(t) \bigr{)} v = 0,
\end{equation}
with $T$-periodic boundary conditions. 

\medskip

The proof of Theorem~\ref{th-prel1} is grounded on the following result (see \cite{BoZa-13}).

\begin{proposition}\label{propsub}
Suppose that:
\begin{itemize}
\item[$(i)$] there exists a $T$-periodic solution $u^{*}(t)$ of \eqref{eqsub} satisfying
\begin{equation}\label{hpmorse}
\lambda_{0}(\partial_u h(t,u^{*}(t))) < 0;
\end{equation}
\item[$(ii)$] there exists a $T$-periodic function $\alpha \in W^{2,1}_{\textnormal{loc}}(\mathbb{R})$ satisfying
\begin{equation*}
\alpha''(t) + h(t,\alpha(t)) \geq 0, \quad \text{a.e. } t \in \mathbb{R},
\end{equation*}
and
\begin{equation*}
\alpha(t) < u^{*}(t), \quad \forall \, t \in \mathbb{R}.
\end{equation*}
\end{itemize}
Then there exists $k^{*} \geq 1$ such that for any integer $k \geq k^{*}$ there exists an integer $m_{k} \geq 1$ such that, for any integer $j$ relatively prime with $k$ and such that
$1 \leq j \leq m_{k}$, equation \eqref{eqsub} has two subharmonic solutions $u_{k,j}^{(1)}(t)$,  $u_{k,j}^{(2)}(t)$ of order $k$ (not belonging to the same periodicity class),
such that, for $i=1,2$, $u_{k,j}^{(i)}(t) - u^{*}(t)$ has exactly $2j$ zeros in the interval $\mathopen{[}0,kT\mathclose{[}$ and
\begin{equation*}
\alpha(t) \leq u_{k,j}^{(i)}(t), \quad \forall \, t \in \mathbb{R}.
\end{equation*}
\end{proposition}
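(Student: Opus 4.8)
The plan is to recast \eqref{eqsub} as a planar Hamiltonian system centred at $u^{*}$ and to apply the Poincar\'{e}-Birkhoff fixed point theorem to the $k$-th iterate of its Poincar\'{e} map, following \cite{BoZa-13}. First I would perform the change of variable $v := u - u^{*}$: since $u^{*\prime\prime} = -h(t,u^{*})$, the function $v$ solves $v'' + \tilde{h}(t,v) = 0$ with $\tilde{h}(t,v) := h(t,u^{*}(t)+v) - h(t,u^{*}(t))$, so that $v \equiv 0$ is an equilibrium and the subharmonics of \eqref{eqsub} oscillating around $u^{*}$ correspond exactly to $kT$-periodic solutions of the $v$-equation winding around the origin. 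Because $s \mapsto h(t,s)$ has linear growth at infinity, all solutions are globally defined, and the Poincar\'{e} (time-$T$) map $\Phi$ together with its iterates $\Phi^{k}$ are well-defined area-preserving homeomorphisms of the plane; I would then work with the rotation numbers of the associated flow in polar-like coordinates around the origin.

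The heart of the argument is a twist estimate between an inner and an outer guiding curve. Near the origin the linearisation is $v'' + \partial_{u} h(t,u^{*}(t))\,v = 0$, and I would translate hypothesis $(i)$, namely $\lambda_{0}(\partial_{u} h(t,u^{*}(t))) < 0$, into a \emph{lower} bound on the rotation number: the negativity of the principal $T$-periodic eigenvalue forces the linearised flow to rotate with a rotation number $\rho_{-} > 0$ per period, so that small-amplitude solutions of the $v$-equation complete at least $k\rho_{-}$ turns in time $kT$. For large amplitudes I would exploit the explicit structure of $h$, which is identically zero for $s < 0$ and affine for $s > R$: wide oscillations spend long intervals in the region where the restoring term vanishes (free inertial motion) and therefore rotate arbitrarily slowly, yielding an \emph{upper} bound $\rho_{+}$, with $\rho_{+}$ as small as desired, on the outer rotation. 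Hypothesis $(ii)$ enters precisely here: the $T$-periodic lower solution $\alpha < u^{*}$ provides a barrier that both traps the admissible solutions from below (ensuring $\alpha \leq u_{k,j}^{(i)}$) and furnishes a star-shaped outer boundary on which $\Phi$ is controlled, so that together with the a priori bound $R$ one obtains a genuine topological annulus around the origin on which $\Phi^{k}$ is a twist homeomorphism.

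With the two rotation rates fixed, for every integer $k$ large enough that $k(\rho_{-}-\rho_{+}) > 1$ the interval $\mathopen{]}k\rho_{+}, k\rho_{-}\mathclose{[}$ contains at least one integer, and I would set $m_{k}$ equal to the number of such integers. For each such $j$ the Poincar\'{e}-Birkhoff theorem, in the area-preserving planar version suited to star-shaped annuli, produces at least two fixed points of $\Phi^{k}$ whose orbits wind exactly $j$ times around the origin over $\mathopen{[}0,kT\mathclose{[}$, giving two $kT$-periodic solutions $v_{k,j}^{(1)}, v_{k,j}^{(2)}$. Since each complete turn crosses the axis $v = 0$ twice, $v_{k,j}^{(i)} = u_{k,j}^{(i)} - u^{*}$ has exactly $2j$ zeros in $\mathopen{[}0,kT\mathclose{[}$. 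Requiring $j$ relatively prime with $k$ forces $kT$ to be the minimal period among integer multiples of $T$: were a solution $\ell T$-periodic for some $1 \leq \ell < k$, the total winding over $\ell T$, equal to $\ell j / k$, would be an integer, forcing $k \mid \ell$ since $\gcd(j,k)=1$, which is impossible. This promotes the solutions to genuine subharmonics of order $k$, the two fixed points belonging to distinct periodicity classes.

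I expect the main obstacle to be the two rotation-number estimates and the construction of the invariant annulus. Converting the spectral condition $\lambda_{0} < 0$ into a quantitative positive lower bound $\rho_{-}$ for the inner rotation requires the Morse-index/rotation-number dictionary for Hill's equation, together with a careful continuity argument passing from the linearisation to the nonlinear flow near the origin; symmetrically, keeping the outer rotation below a small $\rho_{+}$ while simultaneously using the lower solution $\alpha$ to confine the solutions and render the outer boundary star-shaped is the delicate geometric point on which the very applicability of the Poincar\'{e}-Birkhoff theorem ultimately rests.
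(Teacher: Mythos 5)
Your overall strategy---translating to $v=u-u^{*}$, applying the Poincar\'{e}-Birkhoff theorem to the $k$-th iterate of the Poincar\'{e} map, converting $\lambda_{0}(\partial_{u}h(t,u^{*}(t)))<0$ into a positive rotation number $\rho_{-}$ for the linearisation (fast inner rotation), and deducing minimality of the period from $\gcd(j,k)=1$---is exactly the route of the paper and of \cite{BoZa-13}. There is, however, a genuine gap in your outer twist estimate, and it changes the conclusion you actually prove. You fix a per-period outer rotation rate $\rho_{+}>0$ (``as small as desired'', but fixed \emph{before} $k$) and obtain subharmonics only for the integers $j\in\mathopen{]}k\rho_{+},k\rho_{-}\mathclose{[}$. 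For $k$ large this interval excludes $j=1,\ldots,\lceil k\rho_{+}\rceil$, whereas Proposition~\ref{propsub} requires \emph{all} $j$ coprime with $k$ with $1\leq j\leq m_{k}$---in particular $j=1$, which is precisely the case the paper singles out after Theorem~\ref{th-main1} to get two subharmonics of every large order. No fixed positive rate $\rho_{+}$, however small, can yield a range of windings starting at $1$. Worse, for the specific $h$ of \eqref{eqsub} a uniform small rate bound is simply false: in the region $s>R$ the truncated nonlinearity is linear in $s$ with coefficient $q(t)g'(R)$, so on the intervals where $q>0$ large solutions swing down \emph{fast} (Sturm comparison with $w''+\delta w=0$, $\delta>0$); moreover a solution that starts large need not remain large, so a rate estimate ``on the outer circle'' does not control the whole evolution on $\mathopen{[}0,kT\mathclose{]}$.

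What is needed---and what the paper's sketch actually asserts when it says larger solutions are ``unable to do a complete turn''---is a $k$-\emph{dependent} outer estimate: for every $k$ there exists $R_{k}$ such that any solution starting at distance $\geq R_{k}$ from the origin performs \emph{less than one full revolution} in $\mathopen{[}0,kT\mathclose{]}$; then the twist interval is essentially $\mathopen{]}0,k\rho_{-}\mathclose{[}$ and every $1\leq j\leq m_{k}$ is admissible. The proof of this estimate has two steps, both missing from your proposal. First, an elastic (Gronwall-type) estimate, with constants depending on $kT$, guarantees that a solution starting at distance $R_{k}$ stays at distance $\geq A_{k}$ (arbitrarily large, if $R_{k}$ is large) for all $t\in\mathopen{[}0,kT\mathclose{]}$. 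Second, a solution that stays at distance $A_{k}>\|u^{*}\|_{\infty}+\|(u^{*})'\|_{\infty}$ from the origin cannot complete a turn: a full revolution forces a crossing of the half-line $\{v<0,\ v'=0\}$; such a crossing at a point with $|v|\leq\|u^{*}\|_{\infty}$ is excluded (the solution is far from the origin), while a crossing with $v<-\|u^{*}\|_{\infty}$ lies in the region $u<0$, which the solution can only have entered through $u=0$ with $|v'|$, hence $u'$, hugely negative; there $h\equiv0$ gives $u''=0$, so $u'$ stays hugely negative, $v'=u'-(u^{*})'$ never vanishes, and the solution falls forever without turning. This is where the one-sided structure of $h$ (and, in the general statement, the lower solution $\alpha$ via the usual truncation below $\alpha$) really enters---not through a rotation-rate bound. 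Once this $k$-dependent outer boundary is in place, the rest of your argument (the $2j$ zeros, and the coprimality computation $i=\ell j/k\in\mathbb{Z}\Rightarrow k\mid\ell$) is correct and matches the paper.
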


The proof of Proposition~\ref{propsub} consists of an application of the Poincar\'{e}-Birkhoff fixed point theorem in $\mathbb{R}^{2}$.
The crucial point is to prove that there exists a $k$-th iterate of the Poincar\'{e} map associated with \eqref{eqhill} (which is an area-preserving homeomorphism) that satisfies the ``twist condition'' for an annular domain in the phase-plane. The main idea is to check that ``smaller'' solutions (departing sufficiently near the origin) make more than one revolution (around the origin) in $\mathopen{[}0,kT\mathclose{]}$ and ``larger'' solutions rotate very slowly and thus are unable to do a complete turn (around the origin).
We highlight that the ``minimality'' of the period of the $kT$-periodic solutions is a direct consequence of this powerful fixed point theorem.

\medskip

Furthermore, we introduce this crucial result about the computation of $\lambda_{0}(q(t)g'(u(t)) )$ for every positive $T$-periodic solution $u(t)$ of $(\mathscr{E})$ (independently of its existence).
The proof is based on the strict convexity assumption $(g_{**})$ and takes advantage of an algebraic trick adopted in \cite{BrHe-90}.

\begin{lemma}\label{lem-morse}
Let $q\colon \mathbb{R} \to \mathbb{R}$ be a $T$-periodic locally integrable function.
Let $g \in \mathcal{C}^{2}(\mathbb{R}^{+})$ satisfy $(g_{*})$ and $(g_{**})$.
If $u(t)$ is a positive $T$-periodic solution of $(\mathscr{E})$, then
\begin{equation*}
\lambda_{0}\bigl{(}q(t)g'(u(t)) \bigr{)}<0.
\end{equation*}
\end{lemma}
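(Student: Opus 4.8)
The plan is to read $\lambda_{0}\bigl{(}q(t)g'(u(t))\bigr{)}$ through its variational characterization and to exhibit one explicit test function that already makes the relevant quadratic form negative. Write $Q(t):=q(t)g'(u(t))$. The principal eigenvalue of \eqref{eqhill} associated with $Q$ admits the Rayleigh quotient representation
\begin{equation*}
\lambda_{0}(Q) = \min_{\substack{v\in H^{1}_{T}\\ v\not\equiv0}} \dfrac{\int_{0}^{T}\bigl{(}v'(t)^{2}-Q(t)v(t)^{2}\bigr{)}\,dt}{\int_{0}^{T}v(t)^{2}\,dt},
\end{equation*}
where $H^{1}_{T}$ denotes the $T$-periodic functions of Sobolev class $H^{1}$. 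Hence it suffices to produce a single admissible $\psi$ for which the numerator $\int_{0}^{T}(\psi'^{2}-Q\psi^{2})\,dt$ is strictly negative.

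First I would record why the naive candidate $v=u$ fails, since this pinpoints the real difficulty. Integrating $u''=-q\,g(u)$ by parts gives $\int_{0}^{T}u'^{2}\,dt=\int_{0}^{T}q\,g(u)\,u\,dt$, so the numerator reduces to $\int_{0}^{T}q\,u\,\bigl{(}g(u)-g'(u)u\bigr{)}\,dt$. Strict convexity $(g_{**})$ together with $g(0)=0$ yields $g(s)-g'(s)s<0$ for $s>0$, but this favourable sign is weighted by the sign-changing $q(t)$, so no conclusion can be drawn. The heart of the matter — the Brown–Hess algebraic trick — is to choose instead $\psi:=g(u(\cdot))$, which is positive by $(g_{*})$ (as $u>0$) and $T$-periodic because $u$ is. The key computation is then exact: from $\psi'=g'(u)u'$ and $\psi''=g''(u)u'^{2}+g'(u)u''$, substituting $g'(u)u''=-q\,g'(u)g(u)=-Q\psi$ via $(\mathscr{E})$ cancels the indefinite $u''$-term and leaves the pointwise identity $\psi''+Q\psi=g''(u)\,u'^{2}$.

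Integrating by parts (no boundary terms, by $T$-periodicity) I obtain
\begin{equation*}
\int_{0}^{T}\bigl{(}\psi'^{2}-Q\psi^{2}\bigr{)}\,dt = -\int_{0}^{T}(\psi''+Q\psi)\,\psi\,dt = -\int_{0}^{T} g''(u(t))\,u'(t)^{2}\,g(u(t))\,dt.
\end{equation*}
By $(g_{**})$ and $(g_{*})$ one has $g''(u)>0$ and $g(u)>0$, so the integrand is non-negative; since $u$ is non-constant (a constant positive solution would force $q\,g(u)\equiv0$, hence $q\equiv0$, excluded because $q\not\equiv0$), the derivative $u'$ is not identically zero and the integral is strictly positive. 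Thus the numerator is strictly negative, the Rayleigh quotient at $\psi$ is negative, and $\lambda_{0}(Q)<0$, as claimed. The only genuine obstacle here is the discovery of the test function $\psi=g(u)$: it is precisely the choice that trades the sign-changing weight against $u''$ and converts the convexity hypothesis $(g_{**})$ directly into negativity of the principal eigenvalue; the Rayleigh characterization, the integration by parts, and the non-constancy of $u$ are then routine.
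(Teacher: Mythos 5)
Your proof is correct and is essentially the argument the paper has in mind: the paper states Lemma~\ref{lem-morse} without giving details, attributing it to the algebraic trick of \cite{BrHe-90} (worked out in \cite{BoFe-16}), which is precisely your choice of the test function $\psi=g(u)$ and the identity $\psi''+q(t)g'(u(t))\psi=g''(u)\,u'^{2}$. The only cosmetic difference is that the cited proof pairs this identity with the positive principal eigenfunction of \eqref{eqhill} via Green's identity rather than with the Rayleigh-quotient characterization, and your observation that strict negativity needs $u$ non-constant (hence $q\not\equiv0$, implicit in the paper through $(q_{\#})$ and $(q_{*})$) is a legitimate point the statement leaves tacit.
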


\medskip

As a third step, we are going to check that conditions $(i)$ and $(ii)$ of Proposition~\ref{propsub} are satisfied.

Theorem~\ref{th-prel1} ensures the existence of a positive $T$-periodic function $u^{*}(t)$ of $(\mathscr{E})$ with $r < \|u^{*}\|_{\infty} < R$ and, by Lemma~\ref{lem-morse}, $u^{*}(t)$ satisfies
\begin{equation*}
\lambda_{0}\bigl{(}q(t)g'(u^{*}(t)) \bigr{)} < 0.
\end{equation*}
Since $h(t,s) = q(t)g(s)$ for $0 \leq s \leq R$, we deduce that $u^{*}(t)$ is a (positive) $T$-periodic solution of \eqref{eqsub} satisfying \eqref{hpmorse}.
Hence, we have verified condition $(i)$ of Proposition~\ref{propsub}.

Next, taking $\alpha(t) \equiv 0$, we immediately have condition $(ii)$ verified, due to the fact that $\alpha(t)$ is a trivial solution of \eqref{eqsub} and $0 = \alpha(t) < u^{*}(t)$ for every $t \in \mathbb{R}$.

\medskip

As a final step, we apply Proposition~\ref{propsub}. Clearly, due to the uniqueness for the Cauchy problems, we have that $u_{k,j}^{(i)}(t) > 0$ for any $t \in \mathbb{R}$.
Moreover, from the choice of $R>0$ we obtain that $u_{k,j}^{(i)}(t) < R$ for any $t \in \mathbb{R}$. 
Hence, the proof of Theorem~\ref{th-main1} is concluded.
\qed

\begin{remark}\label{rem-3.1}
In \cite{BoFe-16}, a more general version of Theorem~\ref{th-main1} was proved and, as a consequence, the authors can also deal with nonlinearities $g(s)$ with a singularity, or nonlinearities $g(s)$ only with superlinear growth condition at zero (in this latter case, obtaining subharmonics to the parameter-dependent equation $u''+\lambda q(t)g(u)=0$ for $\lambda>0$ large enough).
$\hfill\lhd$
\end{remark}

\section{Second result: topological approach}\label{section-4}

In this section we present a topological approach, based on Mawhin's coincidence degree theory, which allows to obtain infinitely many subharmonics to
\begin{equation*}
u'' + \bigl{(}a^{+}(t) - \mu a^{-}(t)\bigr{)} g(u) = 0,
\leqno{(\mathscr{E}_{\mu})}
\end{equation*}
as stated in Theorem~\ref{th-main2}. We refer to \cite{FeZa-pp2015} for the missing details and other discussion on this issue.

\medskip

Let $a \colon \mathbb{R} \to \mathbb{R}$ be a $T$-periodic locally integrable function satisfying condition $(a_{*})$ (on the interval $\mathopen{[}0,T\mathclose{]}$) with $m$ intervals of positivity $I^{+}_{i}$ separated by $m$ intervals of negativity $I^{-}_{i}$.
In the following, $a(t)$ is treated as a $kT$-periodic function, for an integer $k\geq 2$.
Accordingly, defining the intervals
\begin{equation*}
J^{\pm}_{i,\ell}:= I^{\pm}_{i} + \ell\, T, \quad \text{ for } \; i=1,\ldots,m \; \text{ and } \; \ell \in \mathbb{Z},
\end{equation*}
in the periodicity interval $\mathopen{[}0,kT\mathclose{]}$ the weight $a(t)$ shows $km$ positive humps $J^{+}_{i,\ell}$ separated by $km$ negative ones $J^{-}_{i,\ell}$
(for $i=1,\ldots, m$ and $\ell= 0,\ldots, k-1$).

\medskip

As a first step, we apply Theorem~\ref{th-prel2} to the interval $\mathopen{[}0,kT\mathclose{]}$ and we obtain the following statement (see also Figure~\ref{fig-01}).

\begin{theorem}\label{th-sub}
Let $a \colon \mathbb{R} \to \mathbb{R}$ be a $T$-periodic locally integrable function satisfying $(a_{*})$.
Let $g \in \mathcal{C}^{1}(\mathbb{R}^{+})$ satisfy $(g_{*})$ and $(g_{s})$.
Then there exists $\mu^{*} > 0$ such that, for all $\mu > \mu^{*}$ and for every integer $k\geq2$,
equation $(\mathscr{E}_{\mu})$ has at least $2^{km}-1$ positive $kT$-periodic solutions.

More precisely, there exist two constants $r,R$ with $0 < r < R$ and $\mu^{*}>0$ such that, for any $\mu > \mu^{*}$ and for any integer $k\geq2$, 
given any $km$-tuple $\mathcal{S} = (s_{1},\ldots,s_{km}) \in \{0,1\}^{km}$, with $\mathcal{S} \neq (0,\ldots,0)$,
there exists at least a positive $kT$-periodic solution $u(t)$ of $(\mathscr{E}_{\mu})$ such that
\begin{itemize}
\item $\max_{t \in J^{+}_{i,\ell}} u(t) < r$, if $s_{j} = 0$ for $j= i + \ell m$;
\item $r < \max_{t \in J^{+}_{i,\ell}} u(t) < R$, if $s_{j} = 1$ for $j= i + \ell m$.
\end{itemize}
\end{theorem}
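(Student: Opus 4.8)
The plan is to apply Theorem~\ref{th-prel2} directly, but to the interval $\mathopen{[}0,kT\mathclose{]}$ rather than $\mathopen{[}0,T\mathclose{]}$, exploiting the periodicity structure of $a(t)$. The key observation is that if $a(t)$ satisfies $(a_{*})$ as a $T$-periodic function with $m$ positive humps $I^{+}_{i}$ and $m$ negative humps $I^{-}_{i}$ on $\mathopen{[}0,T\mathclose{]}$, then, viewing $a(t)$ as a $kT$-periodic function, hypothesis $(a_{*})$ holds on $\mathopen{[}0,kT\mathclose{]}$ with exactly $km$ positive humps, namely the translated intervals $J^{+}_{i,\ell} = I^{+}_{i} + \ell T$ for $i = 1,\ldots,m$ and $\ell = 0,\ldots,k-1$, separated by the $km$ negative humps $J^{-}_{i,\ell}$. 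First I would verify that this relabelling indeed produces a configuration of the type required by $(a_{*})$ (with $m$ replaced by $km$): the positive and negative intervals remain closed, pairwise disjoint, and alternating, and their union covers $\mathopen{[}0,kT\mathclose{]}$; this is immediate from the periodicity of $a(t)$ and the assumption that $(a_{*})$ holds on one period.

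Once this identification is made, I would invoke Theorem~\ref{th-prel2} with the fundamental period taken to be $kT$ and the number of positive humps taken to be $km$. The theorem then yields a threshold $\mu^{*} > 0$ such that, for all $\mu > \mu^{*}$, equation $(\mathscr{E}_{\mu})$ possesses at least $2^{km} - 1$ positive $kT$-periodic solutions. The more precise, constructive part of the statement — the existence, for each nonzero $\{0,1\}^{km}$-tuple $\mathcal{S}$, of a positive $kT$-periodic solution whose maximum on each hump $J^{+}_{i,\ell}$ lies below $r$ when $s_{j} = 0$ and strictly between $r$ and $R$ when $s_{j} = 1$ (with $j = i + \ell m$) — is exactly the content of the sets $\Lambda^{\mathcal{I}}$ constructed in the proof of Theorem~\ref{th-prel2}: each nonempty subset of indices $\mathcal{I} \subseteq \{1,\ldots,km\}$ corresponds bijectively to a nonzero tuple $\mathcal{S}$, and the nonvanishing of $D_{L}(L-N,\Lambda^{\mathcal{I}})$ localizes a solution in $\Lambda^{\mathcal{I}}$ with precisely the prescribed amplitude behavior on each hump. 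I would therefore extract the solution from the corresponding set $\Lambda^{\mathcal{I}}$, which immediately gives the stated bounds.

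The only genuine point requiring care — and the one I expect to be the main obstacle — is the \emph{uniformity of the threshold $\mu^{*}$ in $k$}. A naive application of Theorem~\ref{th-prel2} on $\mathopen{[}0,kT\mathclose{]}$ would a priori produce a threshold $\mu^{*}_{k}$ depending on $k$, whereas the statement of Theorem~\ref{th-sub} asserts a single $\mu^{*}$ (and a single pair $r,R$) valid simultaneously for all $k \geq 2$. The resolution rests on the translation invariance of the construction: since $a(t)$ is genuinely $T$-periodic, the local estimates on each hump $J^{+}_{i,\ell}$ that enter the degree computation in the proof of Theorem~\ref{th-prel2} depend only on the hump $I^{+}_{i}$ it is a translate of, and hence only on the $m$ original humps and not on $\ell$ or on $k$. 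Consequently the constants $r$, $R$, and the threshold $\mu^{*}$ determined from the data on a single period $\mathopen{[}0,T\mathclose{]}$ control the degree on every $\Lambda^{\mathcal{I}}$ uniformly in $k$; I would make this explicit by noting that $r$ and $R$ are fixed exactly as in \textit{Step~2} and \textit{Step~3} of the proof of Theorem~\ref{th-prel1}, quantities that manifestly do not depend on the period length. With this uniformity established, the conclusion follows verbatim from the degree-theoretic argument of Theorem~\ref{th-prel2}, completing the proof.
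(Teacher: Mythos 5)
Your proposal is correct and takes essentially the same route as the paper: apply Theorem~\ref{th-prel2} on the interval $\mathopen{[}0,kT\mathclose{]}$ viewing $a(t)$ as $kT$-periodic with $km$ positive humps, read off the amplitude localization from the sets $\Lambda^{\mathcal{I}}$ in \eqref{eq-lambda}, and justify the uniformity of $\mu^{*}$ (and of $r$, $R$) in $k$ by the fact that the estimates entering the degree computation are local to the translated humps $J^{+}_{i,\ell}$ and hence depend only on the data of $a(t)$ on $\mathopen{[}0,T\mathclose{]}$. Indeed, this uniformity in $k$ is exactly the point the paper itself singles out as the crucial step.
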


The crucial point of the proof is to observe that the constant $\mu^{*}$ does not depend on the integer $k\geq2$. Indeed, when applying Theorem~\ref{th-prel2} on the interval $\mathopen{[}0,kT\mathclose{]}$ we can choose the constant $\mu^{*}$ depending only on the local properties of $a(t)$ (in the interval $\mathopen{[}0,T\mathclose{]}$).
The thesis follows from Theorem~\ref{th-prel2} and from the definition \eqref{eq-lambda} of the sets $\Lambda^{\mathcal{I}}$ (with respect to the interval $\mathopen{[}0,kT\mathclose{]}$ and to the subintervals $J^{+}_{i,\ell}$, for $i=1,\ldots, m$ and $\ell= 0,\ldots, k-1$).

\begin{figure}[h!]
\centering
\begin{tikzpicture} [scale=1]
\node at (0,2.8) {\includegraphics[width=0.813\textwidth]{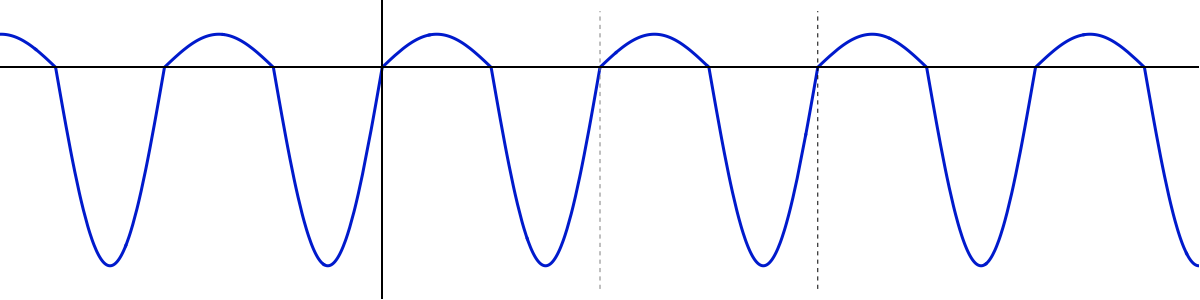}};
\node at (-3.8,4) {$a_{\mu}(t)$};
\node at (0.2,3.23) {$T$};
\node at (2.07,3.23) {$2T$};
\node at (-4,0) {\includegraphics[width=0.31\textwidth]{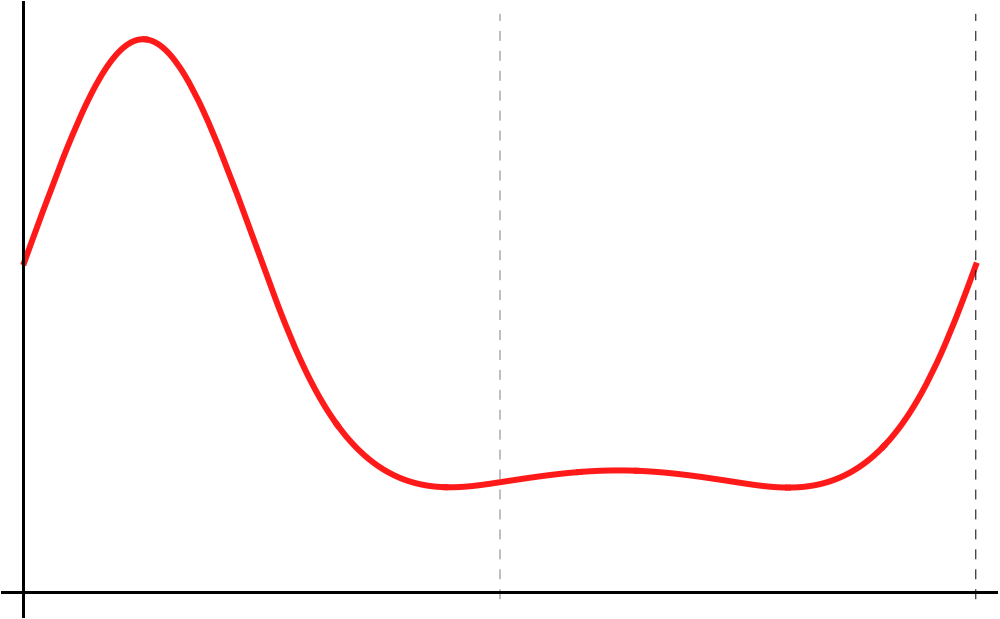}};
\node at (0,0) {\includegraphics[width=0.31\textwidth]{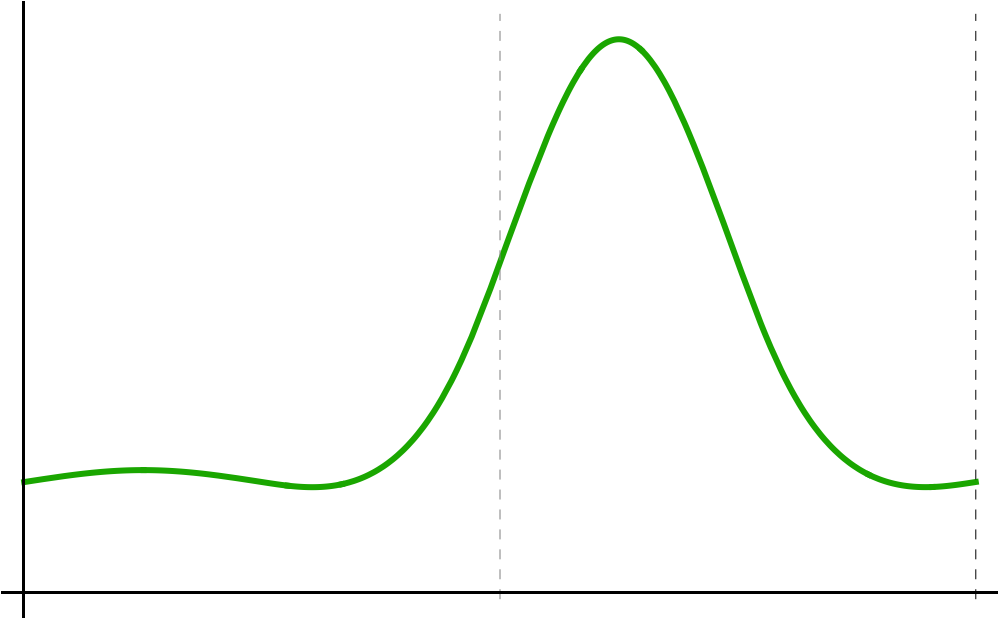}};
\node at (4,0) {\includegraphics[width=0.31\textwidth]{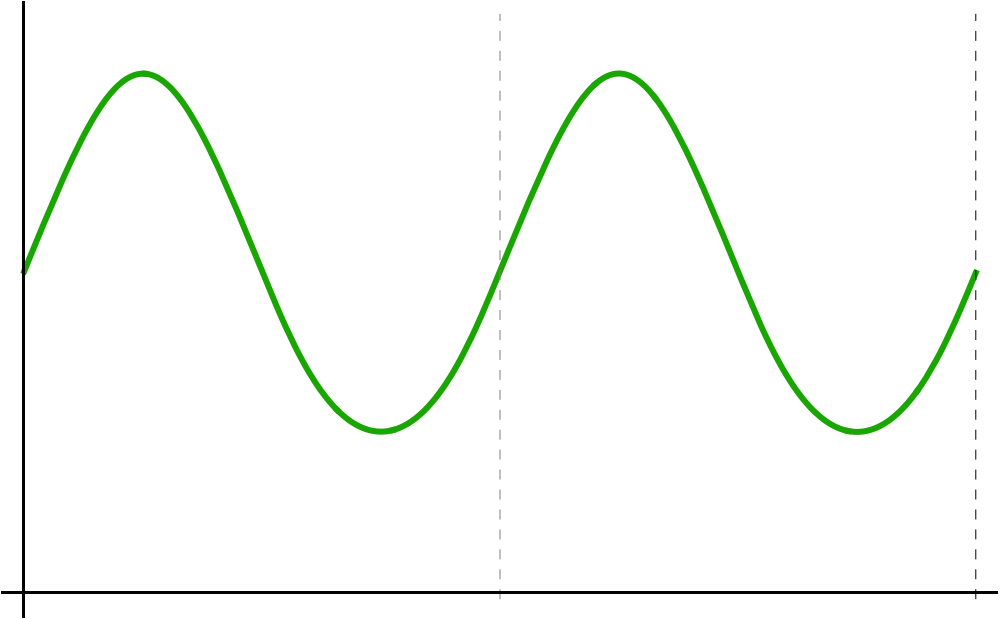}};
\node at (-4,-1.5) {$(1,0)$};
\node at (0,-1.5) {$(0,1)$};
\node at (4,-1.5) {$(1,1)$};
\end{tikzpicture}
\caption{Using a numerical simulation we have studied the $2$-periodic solutions of equation $(\mathscr{E}_{\mu})$ where $a(t) = \sin(t)$, $T=2\pi$, $\mu = 6$ and
$g(s) = 100(s^{2}+s^{3})$. In the upper part, the figure shows the graph of the weight $q(t)=a_{\mu}(t)$; while in the lower part, it shows the graphs of three $2T$-periodic positive solutions, in accordance with Theorem~\ref{th-sub}.
The three solutions are in correspondence with the three strings $(1,0)$, $(0,1)$ and $(1,1)$, respectively, as stated in Theorem~\ref{th-sub}.
The first two solutions are subharmonic solutions of order $2$ and the third one is a $1$-periodic solution.
As subharmonic solutions of order $2$, we consider only the first one, since the second one is a translation by $1$ of the first solution.
}
\label{fig-01}
\end{figure}

\medskip

As a second step, we prove that  there is at least one subharmonic of order $k$ among the $2^{km}-1$ positive $kT$-periodic solutions of $(\mathscr{E}_{\mu})$ provided by Theorem~\ref{th-sub}.
It is sufficient to consider the string $(1,0,\ldots,0)$ of length $km$ (or equivalently the set of indices $\mathcal{I}:= \{1\}$).
Theorem~\ref{th-sub} ensures the existence of a positive $kT$-periodic solution $u(t)$ to $(\mathscr{E}_{\mu})$ such that $u\in\Lambda^{\{1\}}$.
This implies that there exists $\hat{t}_{1}\in J^{+}_{1,0}=I^{+}_{1}$ such that $r < u(\hat{t}_{1}) < R$ and $0<u(t)<r$ on $J^{+}_{i,\ell}$, for $i=1,\ldots, m$ and $\ell= 0,\ldots, k-1$ with $(i,\ell)\neq(1,0)$. Then $u(\hat{t}_{1})\neq u(t)$ for all $t\in J^{+}_{i,\ell}$, for $i=1,\ldots, m$ and $\ell= 0,\ldots, k-1$ with $(i,\ell)\neq(1,0)$.
Therefore $u(t)$ is not $\ell T$-periodic for every $\ell =1,\ldots,k-1$. We conclude that $u(t)$ is a subharmonic solution of order $k$.
Hence, the proof of Theorem~\ref{th-main2} is concluded.
\qed

\medskip

We complete this section with a discussion on the number of subharmonics. Theorem~\ref{th-main2} ensures the existence of at least a subharmonic solution of order $k$ for every integer $k\geq 0$. Therefore, a natural question that arise is the investigation on the number of subharmonic solutions of order $k$.
Thus now our goal is to select strings of length $km$ which are minimal in some sense (in order to obtain the ``minimality'' of the period of $kT$-periodic solutions) and to count only once the strings corresponding to subharmonics belonging to the same periodicity class.
To this end, we can take advantage of the combinatorial concept of \textit{aperiodic necklaces of length $k$} which are made by arranging $k$ beads whose color is chosen from a list of $n$ colors (see Figure~\ref{fig-02}). Equivalently we can deal also with \textit{$n$-ary Lyndon words of length $k$} (that is strings of $k$ digits of
an alphabet with $n$ symbols which are strictly smaller in the lexicographic ordering than all of its nontrivial rotations).

\definecolor{cGF}{rgb}{1,0.9,0}

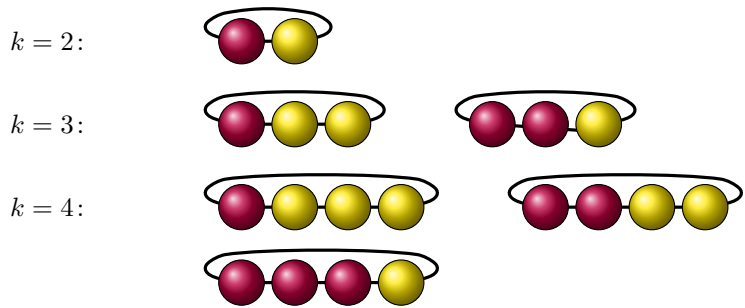
\begin{figure}[h!]
\centering
\begin{tikzpicture}
\draw (-2.5,0) node {$k=2\colon$};
\draw[line width=1.2pt] (0,0) -- (0.7,0);
\draw [line width=1.2pt, rounded corners]
(0.98,0.05) .. controls (1.2,0.1) and (1.2,0.2) .. 
(1.05,0.35)  .. controls (0.5,0.45) and (0.2,0.45) .. 
(-0.35,0.35) .. controls (-0.5,0.2) and (-0.5,0.1) ..
(-0.28,0.05);
\filldraw[ball color=purple] (0,0) circle (0.3);
\filldraw[ball color=cGF] (0.7,0) circle (0.3);
\draw (-2.5,-1.1) node {$k=3\colon$};
\draw[line width=1.2pt] (0,-1.1) -- (1.4,-1.1);
\draw [line width=1.2pt, rounded corners]
(1.68,-1.05) .. controls (1.9,-1) and (1.9,-0.9) .. 
(1.75,-0.75)  .. controls (1.1,-0.65) and (0.3,-0.65) .. 
(-0.35,-0.75) .. controls (-0.5,-0.9) and (-0.5,-1) ..
(-0.28,-1.05);
\filldraw[ball color=purple] (0,-1.1) circle (0.3);
\filldraw[ball color=cGF] (0.7,-1.1) circle (0.3);
\filldraw[ball color=cGF] (1.4,-1.1) circle (0.3);
\draw[line width=1.2pt] (3.3,-1.1) -- (4.7,-1.2);
\draw [line width=1.2pt, rounded corners]
(4.98,-1.05) .. controls (5.2,-1) and (5.2,-0.9) .. 
(5.05,-0.75)  .. controls (4.4,-0.65) and (3.6,-0.65) .. 
(2.95,-0.75) .. controls (2.8,-0.9) and (2.8,-1) ..
(3.02,-1.05);
\filldraw[ball color=purple] (3.3,-1.1) circle (0.3);
\filldraw[ball color=purple] (4,-1.1) circle (0.3);
\filldraw[ball color=cGF] (4.7,-1.1) circle (0.3);
\draw (-2.5,-2.2) node {$k=4\colon$};
\draw[line width=1.2pt] (0,-2.2) -- (2.1,-2.2);
\draw [line width=1.2pt, rounded corners]
(2.38,-2.15) .. controls (2.6,-2.1) and (2.6,-2) .. 
(2.45,-1.85)  .. controls (1.8,-1.75) and (0.3,-1.75) .. 
(-0.35,-1.85) .. controls (-0.5,-2) and (-0.5,-2.1) ..
(-0.28,-2.15);
\filldraw[ball color=purple] (0,-2.2) circle (0.3);
\filldraw[ball color=cGF] (0.7,-2.2) circle (0.3);
\filldraw[ball color=cGF] (1.4,-2.2) circle (0.3);
\filldraw[ball color=cGF] (2.1,-2.2) circle (0.3);
\draw[line width=1.2pt] (4,-2.2) -- (6.1,-2.2);
\draw [line width=1.2pt, rounded corners]
(6.38,-2.15) .. controls (6.6,-2.1) and (6.6,-2) .. 
(6.45,-1.85)  .. controls (5.8,-1.75) and (4.3,-1.75) .. 
(3.65,-1.85) .. controls (3.5,-2) and (3.5,-2.1) ..
(3.72,-2.15);
\filldraw[ball color=purple] (4,-2.2) circle (0.3);
\filldraw[ball color=purple] (4.7,-2.2) circle (0.3);
\filldraw[ball color=cGF] (5.4,-2.2) circle (0.3);
\filldraw[ball color=cGF] (6.1,-2.2) circle (0.3);
\draw[line width=1.2pt] (0,-3.2) -- (2.1,-3.2);
\draw [line width=1.2pt, rounded corners]
(2.38,-3.15) .. controls (2.6,-3.1) and (2.6,-3) .. 
(2.45,-2.85)  .. controls (1.8,-2.75) and (0.3,-2.75) .. 
(-0.35,-2.85) .. controls (-0.5,-3) and (-0.5,-3.1) ..
(-0.28,-3.15);
\filldraw[ball color=purple] (0,-3.2) circle (0.3);
\filldraw[ball color=purple] (0.7,-3.2) circle (0.3);
\filldraw[ball color=purple] (1.4,-3.2) circle (0.3);
\filldraw[ball color=cGF] (2.1,-3.2) circle (0.3);
\end{tikzpicture}
\vspace*{2pt}
\caption{The figure shows the aperiodic necklaces made by arranging $k$ beads whose color is chosen from a list of 2 colors, when $k\in\{2,3,4\}$. The aperiodic necklaces depicted in the figure correspond to the following Lyndon words on the alphabet $\{a,b\}$: $ab$ for $k=2$, $abb$ and $aab$ for $k=3$, $abbb$, $aabb$ and $aaab$ for $k=4$ (which are the minimal elements in the class of equivalence in the lexicographic ordering). For instance, for $k = 4$, note that the string $abab$ is not acceptable as it represents a sequence of period $2$ and the string $bbaa$ is already counted as $aabb$.}
\label{fig-02}
\end{figure}

It is easy to notice that there exists a one-to-one correspondence between the aperiodic necklaces of length $k$ on $n$ colors and the non-null strings of length $k$ on $n$ symbols.
Consequently, in order to obtain an estimate on the number of positive subharmonic solution of order $k$ to equation $(\mathscr{E}_{\mu})$, it is sufficient to compute the number $\mathcal{S}_{n}(k)$ of aperiodic necklaces of length $k$ on $n$ colors when $n=2^{m}$. The number $\mathcal{S}_{n}(k)$ is given by \textit{Witt's formula}:
\begin{equation}\label{Witt}
\mathcal{S}_{n}(k) = \dfrac{1}{k} \sum_{l|k} \mu(l) \, n^{\frac{k}{l}},
\end{equation}
where $\mu(\cdot)$ is the M\"{o}bius function, defined on $\mathbb{N}\setminus\{0\}$ by $\mu(1) = 1$,
$\mu(l) = (-1)^{s}$ if $l$ is the
product of $s$ distinct primes and $\mu(l) = 0$ otherwise.
For instance, the values of $\mathcal{S}_{2}(k)$ (number of binary Lyndon words of length $k$) for $k=2,\ldots,10$ are $1$, $2$, $3$, $6$, $9$, $18$, $30$, $56$, $99$.

\begin{remark}\label{rem-6.3}
Although in this context formula \eqref{Witt} is related to the concepts of aperiodic necklaces and Lyndon words, this formula come out in different areas of mathematics.
Now we provide an overview of the several meanings of \eqref{Witt}.

Still in combinatorics, it is not difficult to see that $\mathcal{S}_{n}(k)$ is also the number of \textit{$n$-ary de Bruijn sequences of order $k$}. A $n$-ary de Bruijn sequence of order $k$ is a circular string of characters
chosen in an alphabet of size $n$,
for which every possible subsequence of length $k$ appears as a substring of consecutive characters exactly once. For example $aaababbb$ and $bbbabaaa$ are the De Bruijn sequence of order $3$ on the size-$2$ alphabet $\{a,b\}$.
For more details about these concepts and other aspects of the formula in the context of combinatorics on words, we refer to \cite{Lo-97, Ma-1891}
and the very interesting historical survey \cite[\S~4]{BePe-07}.

The number $\mathcal{S}_{n}(k)$ has several meanings even outside combinatorics.
For instance, the integer $\mathcal{S}_{2}(k)$ (of binary Lyndon words of length $k$) corresponds to
the number of periodic points with minimal period $k$ in the iteration of the tent map $f(x) := 2 \min\{x,1-x\}$ on the unit interval (cf.~\cite{Du-95},
also for more general formulas) and to the number of distinct cycles of minimal period $k$
in a shift dynamical system associated with a totally disconnected hyperbolic iterated function system (cf.~\cite[Lemma~1, p.~171]{Ba-88}).
Concerning the more general formula for $\mathcal{S}_{n}(k)$, we just mention two other meanings.
The classical \textit{Witt's formula} (proved in 1937), which is still widely studied in algebra,
gives the dimensions of the homogeneous components of degree $k$ of the free
Lie algebra over a finite set with $n$ elements (cf.~\cite[Corollary~5.3.5]{Lo-97}).
Moreover, in Galois theory, $\mathcal{S}_{n}(k)$ is also the number of
monic irreducible polynomials of degree $k$ over the finite field $\mathbb{F}_{n}$,
when $n$ is a prime power (in this context \eqref{Witt} is also known as \textit{Gauss formula};
we refer to \cite[ch.~14, p.~588]{DuFo-04} for a possible proof).

It is not possible to mention here all the other several implications of formula~\eqref{Witt},
for example in symbolic dynamics, algebra, number theory and chaos theory. For this latter topic,
we only recall the recent paper \cite{JoSaYo-10} where such numbers appear in connection with the study of
period-doubling cascades.

Further information and references can be found in \cite{GiRi-61, KoRaWo-14, Sl-oeis}.
$\hfill\lhd$
\end{remark}

As a consequence, the following holds.

\begin{theorem}\label{th-sub2}
Let $a \colon \mathbb{R} \to \mathbb{R}$ be a $T$-periodic locally integrable function of the form \eqref{amu} satisfying $(a_{*})$.
Let $g \in \mathcal{C}^{1}(\mathbb{R}^{+})$ satisfy $(g_{*})$ and $(g_{s})$.
Then there exists $\mu^{*} > 0$ such that, for each $\mu > \mu^{*}$ and each integer $k\geq2$,
equation $(\mathscr{E}_{\mu})$ has at least $\mathcal{S}_{2^{m}}(k)$ positive subharmonic solutions of order $k$.
\end{theorem}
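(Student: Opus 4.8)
The plan is to translate the combinatorial bookkeeping of Theorem~\ref{th-sub} into the language of aperiodic necklaces and then read off the count from Witt's formula \eqref{Witt}. I keep the same constant $\mu^{*}$ furnished by Theorem~\ref{th-sub}, which is uniform in $k$. First I would record a correspondence between the strings indexing the solutions and words over an alphabet of size $n=2^{m}$. For a fixed integer $k\geq 2$, the $km$ positive humps in $\mathopen{[}0,kT\mathclose{]}$ split into $k$ consecutive blocks of length $m$ according to the translation index $\ell$: block $\ell$ consists of the humps $J^{+}_{i,\ell}$ with $i=1,\ldots,m$ and corresponds to the positions $j=\ell m+1,\ldots,\ell m+m$. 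Thus every string $\mathcal{S}=(s_{1},\ldots,s_{km})\in\{0,1\}^{km}$ decomposes uniquely as a word $(c_{0},c_{1},\ldots,c_{k-1})$ of length $k$ over the alphabet $\{0,1\}^{m}$ of cardinality $n=2^{m}$, where $c_{\ell}=(s_{\ell m+1},\ldots,s_{(\ell+1)m})$.

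The key observation is that time-translation by $T$ acts on these words as a cyclic rotation. Since $a(t)$ is $T$-periodic, the humps satisfy $J^{+}_{i,\ell}+T=J^{+}_{i,\ell+1}$ (indices read modulo $k$), so if a positive $kT$-periodic solution $u$ realizes the word $(c_{0},\ldots,c_{k-1})$ — in the sense that $\max_{J^{+}_{i,\ell}}u<r$ when the corresponding entry is $0$ and $r<\max_{J^{+}_{i,\ell}}u<R$ when it is $1$ — then the translate $u(\cdot+T)$ realizes the rotated word $(c_{1},\ldots,c_{k-1},c_{0})$. Two facts follow, both used below: a solution of this type \emph{determines} its word (the two defining conditions are strict and mutually exclusive, hence each block symbol is unambiguous), and two solutions in the same periodicity class have words that are cyclic rotations of one another.

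Next I would establish the minimality of the period. If the word $(c_{0},\ldots,c_{k-1})$ is \emph{aperiodic}, i.e.~invariant under no nontrivial cyclic rotation, then the solution $u$ realizing it is not $\ell T$-periodic for any $\ell\in\{1,\ldots,k-1\}$: were $u$ to be $dT$-periodic with $d\mid k$ and $d<k$, the pattern of maxima would force $c_{\ell}=c_{\ell+d}$ for all $\ell$, so the word would have period $d$, contradicting aperiodicity. This is exactly the argument used for the single string $(1,0,\ldots,0)$ in the proof of Theorem~\ref{th-main2}, now carried out for an arbitrary aperiodic word. Since an aperiodic word of length $k\geq2$ is never constant, it is in particular not the zero string, so the hypothesis $\mathcal{S}\neq(0,\ldots,0)$ of Theorem~\ref{th-sub} is met, and every aperiodic word yields a genuine positive subharmonic solution of order $k$.

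Finally I would count periodicity classes. Fixing one representative word per aperiodic necklace of length $k$ on $n=2^{m}$ colors and applying Theorem~\ref{th-sub} produces one subharmonic of order $k$ per necklace. These lie in pairwise distinct periodicity classes: if two of them were equivalent, one would be a $T$-translate of the other, so by the intertwining above their words would be cyclic rotations of each other and thus represent the same necklace. Since the number of aperiodic necklaces of length $k$ on $n$ colors equals $\mathcal{S}_{n}(k)$ by Witt's formula \eqref{Witt}, taking $n=2^{m}$ produces at least $\mathcal{S}_{2^{m}}(k)$ distinct periodicity classes of positive subharmonics of order $k$, which is the assertion. The step requiring the most care is precisely this last injectivity argument — that distinct necklaces yield solutions in distinct periodicity classes — and it rests entirely on the two facts isolated above: a solution determines its word, and translation by $T$ acts as rotation.
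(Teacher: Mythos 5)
Your proposal is correct and follows essentially the same route as the paper: regroup the $km$ humps of Theorem~\ref{th-sub} into $k$ blocks read as a word over the alphabet $\{0,1\}^{m}$ of size $2^{m}$, observe that translation by $T$ acts as cyclic rotation, select aperiodic words (necklaces) to get both order-$k$ minimality and distinct periodicity classes, and count them by Witt's formula \eqref{Witt}. Your writeup merely makes explicit the bookkeeping the paper leaves implicit (that a solution determines its word, and that the rotation action makes distinct necklaces give distinct classes), which is a welcome addition rather than a deviation.
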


\begin{remark}\label{rem-4.1}
In Remark~\ref{rem-2.1} it is highlighted that Theorem~\ref{th-prel1} and Theorem~\ref{th-prel2} hold also dealing with equations of the form $(\mathscr{E}_{\mu,c})$ which involve a friction term $cu'$.
Since the proof of Theorem~\ref{th-main2}, developed in the present section, is based on these preliminary results and on Theorem~\ref{th-sub}, we stress that Theorem~\ref{th-main2} and Theorem~\ref{th-sub2} are also valid for equation $(\mathscr{E}_{\mu,c})$. We refer to \cite{FeZa-pp2015} for the details.
$\hfill\lhd$
\end{remark}

\begin{remark}\label{rem-4.2}
In this section we have proved how to produce positive subharmonic solutions of arbitrary order $k$, which are ``coded'' by non-null strings of length $k$.
Starting from Theorem~\ref{th-sub}, via a diagonal argument, in \cite{FeZa-pp2015} the authors proved the existence of chaotic dynamics, precisely they show the existence of positive globally defined bounded solutions exhibiting a complex behavior given by a bi-infinite string (similarly as in Theorem~\ref{th-sub}). This is not the aim of the present manuscript and we refer also to \cite{BoZa-12,So-pp2016aims} for some interesting discussion on the subject.
$\hfill\lhd$
\end{remark}

\section{Final comments and open problems}\label{section-5}

We complete the discussion on subharmonic solutions by proposing some open problems that arise from the comparison of the two main results and that will be subjects of future investigations.

\medskip

First of all, we underline that both main theorems provide infinitely many positive subharmonic solutions to equation
\begin{equation*}
u'' + \bigl{(}a^{+}(t) - \mu a^{-}(t)\bigr{)} g(u) = 0.
\leqno{(\mathscr{E}_{\mu})}
\end{equation*}
More precisely, we have the existence of positive subharmonic solution of order $k$ when $\mu>\mu^{\#}$ and $k$ is sufficiently large (by Theorem~\ref{th-main1}) and when $\mu$ is sufficiently large and $k\geq2$ (by Theorem~\ref{th-main2}).

The first natural question is the following.

\begin{quote}
\textit{Open problem 1. } Are the subharmonic solutions obtained in Theorem~\ref{th-main1} and in Theorem~\ref{th-main2} actually distinct?
\end{quote}

It seems difficult to compare solutions obtained with different techniques.
A possible way to handle this problem is to understand the behavior of the solutions, for example in the intervals where $a_{\mu}(t)$ is non-negative. In order to compare solutions, we raised the problem whether it is possible to describe the behavior of the subharmonic solutions obtained in Theorem~\ref{th-main1} in the same manner as we describe the subharmonics in Theorem~\ref{th-main2}.

\medskip

We recall that in Theorem~\ref{th-main1} we obtain subharmonics oscillating around a $T$-periodic solution $u^{*}(t)$ of equation $(\mathscr{E}_{\mu})$ (for $\mu>\mu^{\#}$). 
We also observe that Theorem~\ref{th-prel1} ensures the existence of (at least) $2^{m}-1$ positive $T$-periodic solutions $u^{*}_{i}(t)$ of $(\mathscr{E}_{\mu})$ for $\mu>\mu^{*}$ and any of these solutions is such that $\lambda_{0}\bigl{(}a_{\mu}(t)g'(u^{*}_{i}(t)) \bigr{)}<0$ (by Lemma~\ref{lem-morse}). Therefore, we can apply $2^{m}-1$ times Proposition~\ref{propsub}, with respect to each periodic solution $u^{*}_{i}(t)$ and thus Theorem~\ref{th-main1} is valid with respect to any $u^{*}_{i}(t)$.

\begin{quote}
\textit{Open problem 2. } Are the subharmonic solutions obtained in the manner described above actually distinct?
\end{quote}

\medskip

Finally, as observed in Remark~\ref{rem-4.1}, Theorem~\ref{th-main2} is valid also in the non-variational setting, that is dealing with equation
\begin{equation*}
u'' + cu' + \bigl{(}a^{+}(t) - \mu a^{-}(t)\bigr{)} g(u) = 0,
\leqno{(\mathscr{E}_{\mu,c})}
\end{equation*}
where $c\in\mathbb{R}$ is an arbitrary constant.

\begin{quote}
\textit{Open problem 3. } Can we provide positive subharmonic solutions in the non-variational setting when $\mu>\mu^{\#}$ and $k$ is sufficiently large?
\end{quote}

It is generally expected that the solutions obtained with the symplectic approach described in Section~\ref{section-3} disappear for (even small) perturbations that destroy the Hamiltonian structure. 
Dealing with equation $(\mathscr{E}_{\mu,c})$, we cannot apply the Poincar\'{e}-Birkhoff fixed point theorem as described in Proposition~\ref{propsub}: a more general version of the Poincar\'{e}-Birkhoff theorem is necessary.

\appendix
\section{Mawhin's coincidence degree}\label{appendix-A}

This appendix is devoted to recalling some basic facts about a version
of Mawhin's coincidence degree for open and possibly unbounded sets that is used in the present paper.
For more details about the coincidence degree, proofs and applications, we refer to \cite{GaMa-77,Ma-79,Ma-93} and the references therein.

Let $X$ and $Z$ be real Banach spaces and let
\begin{equation*}
L \colon \text{\rm dom}\,L (\subseteq X) \to Z
\end{equation*}
be a linear Fredholm mapping of index zero, i.e.~$\text{\rm Im}\,L$ is a closed subspace of $Z$ and
$\text{\rm dim}(\ker L) = \text{\rm codim}(\text{\rm Im}\,L)$ are finite.
We denote by $\ker L = L^{-1}(0)$ the kernel of $L$, by $\text{\rm Im}\,L\subseteq Z$ the image of $L$
and by $\text{\rm coker}\,L \cong Z/\text{\rm Im}\,L$ the complementary subspace of $\text{\rm Im}\,L$ in $Z$.
Consider the linear continuous projections
\begin{equation*}
P \colon X \to \ker L, \qquad Q \colon Z \to \text{\rm coker}\,L.
\end{equation*}
so that
\begin{equation*}
X = \ker L \oplus \ker P, \qquad Z = \text{\rm Im}\,L \oplus \text{\rm Im}\,Q.
\end{equation*}
We denote by
\begin{equation*}
K_{P} \colon \text{\rm Im}\,L \to \text{\rm dom}\,L \cap \ker P
\end{equation*}
the right inverse of $L$, i.e.~$L K_{P}(w) = w$ for each $w\in \text{\rm Im}\,L$.
Since $\ker L$ and $\text{\rm coker}\,L$ are finite dimensional vector spaces of the same dimension,
once an orientation on both spaces is fixed, we choose a linear orientation-preserving isomorphism $J \colon \text{\rm coker}\,L \to \ker L$.

Let
\begin{equation*}
N \colon X \to Z
\end{equation*}
be a nonlinear \textit{$L$-completely continuous} operator, namely $N$ and $K_{P}(Id-Q)N$ are continuous, and also $QN(B)$ and $K_{P}(Id-Q)N(B)$ are relatively compact sets, for each bounded set $B\subseteq X$.
For example, $N$ is $L$-completely continuous when $N$ is continuous, maps bounded sets to bounded sets
and $K_{P}$ is a compact linear operator.
Consider the \textit{coincidence equation}
\begin{equation}\label{eq-A.1}
Lu = Nu,\quad u\in \text{\rm dom}\,L.
\end{equation}
One can easily prove that equation \eqref{eq-A.1} is equivalent to the fixed point problem
\begin{equation}\label{eq-A.2}
u = \Phi(u):= Pu + JQNu + K_{P}(Id-Q)Nu, \quad u\in X.
\end{equation}
Notice that, under the above assumptions, $\Phi \colon X \to X$ is a completely continuous operator.
Therefore, applying Leray-Schauder degree theory to the operator equation \eqref{eq-A.2}, it is possible to solve equation \eqref{eq-A.1} when $L$ is not invertible.

\medskip

Let $\mathcal{O}\subseteq X$ be an open and \textit{bounded} set such that
\begin{equation*}
Lu \neq Nu, \quad \forall \, u\in \text{\rm dom}\,L \cap \partial\mathcal{O}.
\end{equation*}
In this case, the \textit{coincidence degree of $L$ and $N$ in $\mathcal{O}$} is defined as
\begin{equation*}
D_{L}(L-N,\mathcal{O}):= \text{\rm deg}_{LS}(Id-\Phi,\mathcal{O},0),
\end{equation*}
where ``$\text{\rm deg}_{LS}$'' denotes the Leray-Schauder degree. We underline that $D_{L}$ is independent
on the choice of the linear orientation-preserving isomorphism $J$ and of the projectors $P$ and $Q$.

\medskip

Now we present an extension of the coincidence degree to open (possibly unbounded) sets, following the
standard approach used in the theory of fixed point index
to define the Leray-Schauder degree for locally compact maps on arbitrary open sets (cf.~\cite{Nu-85,Nu-93}).
Extensions of coincidence degree to the case of general open sets have been already considered in
previous articles (see for instance \cite{CaHeMaZa-94, MaReZa-00, Mo-96}).

Consider an open set $\Omega\subseteq X$ and suppose that the solution set
\begin{equation*}
\text{\rm Fix}\,(\Phi,\Omega):= \bigl{\{}u\in {\Omega} \colon u =
\Phi u\bigr{\}} = \bigl{\{}u\in {\Omega}\cap \text{\rm dom}\,L \colon Lu = N u\bigr{\}}
\end{equation*}
is compact. According to the extension of Leray-Schauder degree, we can define
\begin{equation*}
\text{\rm deg}_{LS}(Id - \Phi,\Omega,0):=\text{\rm deg}_{LS}(Id - \Phi,\mathcal{V},0),
\end{equation*}
where $\mathcal{V}$ is an open and bounded set with $\text{\rm Fix}\,(\Phi,\Omega) \subseteq \mathcal{V} \subseteq \overline{\mathcal{V}} \subseteq \Omega$.
The definition is independent of the choice of $\mathcal{V}$.
In this case the \textit{coincidence degree} \textit{of $L$ and $N$ in $\Omega$} is defined as
\begin{equation*}
D_{L}(L-N,\Omega):= D_{L}(L-N,{\mathcal{V}}) =\text{\rm deg}_{LS}(Id - \Phi,\mathcal{V},0),
\end{equation*}
with $\mathcal{V}$ as above.
Using the excision property of the Leray-Schauder degree, it is easy to check that
if $\Omega$ is an open and bounded set satisfying $Lu \neq Nu$, for all $u\in \partial\Omega\cap \text{\rm dom}\,L$,
this definition is the usual definition of coincidence degree described above.

\medskip

The main properties of the coincidence degree are the following.
\begin{itemize}
\item \textit{Additivity. }
Let $\Omega_{1}$, $\Omega_{2}$ be open and disjoint subsets of $\Omega$ such that $\text{\rm Fix}\,(\Phi,\Omega)\subseteq \Omega_{1}\cup\Omega_{2}$.
Then
\begin{equation*}
D_{L}(L-N,\Omega) = D_{L}(L-N,\Omega_{1})+ D_{L}(L-N,\Omega_{2}).
\end{equation*}
\item \textit{Excision. }
Let $\Omega_{0}$ be an open subset of $\Omega$ such that $\text{\rm Fix}\,(\Phi,\Omega)\subseteq \Omega_{0}$.
Then
\begin{equation*}
D_{L}(L-N,\Omega)=D_{L}(L-N,\Omega_{0}).
\end{equation*}
\item \textit{Existence theorem. }
If $D_{L}(L-N,\Omega)\neq0$, then $\text{\rm Fix}\,(\Phi,\Omega)\neq\emptyset$,
hence there exists $u\in {\Omega}\cap \text{\rm dom}\,L$ such that $Lu = Nu$.
\item \textit{Homotopic invariance. }
Let $H\colon\mathopen{[}0,1\mathclose{]}\times \Omega \to X$, $H_{\vartheta}(u) := H(\vartheta,u)$, be a continuous homotopy such that
\begin{equation*}
\mathcal{S}:=\bigcup_{\vartheta\in\mathopen{[}0,1\mathclose{]}} \bigl{\{}u\in \Omega\cap \text{\rm dom}\,L \colon Lu=H_{\vartheta}u\bigr{\}}
\end{equation*}
is a compact set and there exists an open neighborhood $\mathcal{W}$ of $\mathcal{S}$ such that $\overline{\mathcal{W}}\subseteq \Omega$ and
$(K_{P}(Id-Q)H)|_{\mathopen{[}0,1\mathclose{]}\times\overline{\mathcal{W}}}$ is a compact map.
Then the map $\vartheta\mapsto D_{L}(L-H_{\vartheta},\Omega)$ is constant on $\mathopen{[}0,1\mathclose{]}$.
\end{itemize}

The following two results are of crucial importance in the computation of the coincidence degree in open and bounded sets.
The proofs are omitted.
Lemma~\ref{lemma_Mawhin} is taken from \cite[Theorem~IV.1]{GaMa-77} and \cite[Theorem~2.4]{Ma-93}, while
Lemma~\ref{lem-abs-deg0} is a classical result (see \cite{Nu-73}) adapted to the present setting
as in \cite[Lemma~2.2, Lemma~2.3]{FeZa-15ade}. By ``$\text{\rm deg}_{B}$'' we denote the Brouwer degree.

\begin{lemma}\label{lemma_Mawhin}
Let $L$ and $N$ be as above and let $\Omega\subseteq X$ be an open and bounded set.
Suppose that
\begin{equation*}
Lu \neq \vartheta Nu, \quad \forall \, u\in \text{\rm dom}\,L \cap \partial\Omega, \; \forall \, \vartheta\in\mathopen{]}0,1\mathclose{]},
\end{equation*}
and
\begin{equation*}
QN(u)\neq0, \quad \forall\, u\in \partial\Omega \cap \ker L.
\end{equation*}
Then
\begin{equation*}
D_{L}(L-N,\Omega) = \text{\rm deg}_{B}(-JQN|_{\ker L},\Omega \cap \ker L,0).
\end{equation*}
\end{lemma}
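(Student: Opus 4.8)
The plan is to follow the classical reduction argument of Gaines and Mawhin: deform the operator $\Phi$ defining the coincidence degree, through an admissible Leray--Schauder homotopy, into a map taking values in the finite-dimensional subspace $\ker L$, and then invoke the reduction property of the Leray--Schauder degree to pass to a Brouwer degree on $\ker L$. Recalling that $D_{L}(L-N,\Omega)=\text{\rm deg}_{LS}(Id-\Phi,\Omega,0)$ with $\Phi(u)=Pu+JQNu+K_{P}(Id-Q)Nu$, I would introduce the homotopy
\begin{equation*}
\Phi_{\vartheta}(u):=Pu+JQNu+\vartheta\,K_{P}(Id-Q)Nu,\qquad \vartheta\in\mathopen{[}0,1\mathclose{]},
\end{equation*}
so that $\Phi_{1}=\Phi$ while $\Phi_{0}(u)=Pu+JQNu$ maps $X$ into $\ker L$. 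Since $N$ is $L$-completely continuous, the map $(\vartheta,u)\mapsto\Phi_{\vartheta}(u)$ is a completely continuous homotopy on the bounded set $\overline{\Omega}$.

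The key step is to verify that $\Phi_{\vartheta}$ has no fixed point on $\partial\Omega$ for any $\vartheta\in\mathopen{[}0,1\mathclose{]}$. Suppose $u=\Phi_{\vartheta}(u)$; note that all three summands lie in $\text{\rm dom}\,L$, so $u\in\text{\rm dom}\,L$. Applying the projector $P$ and using $Pu\in\ker L$, $JQNu\in\ker L$ and $K_{P}(Id-Q)Nu\in\ker P$, one obtains $Pu=Pu+JQNu$, hence $JQNu=0$ and, $J$ being an isomorphism, $QNu=0$. Applying $Id-P$ gives $(Id-P)u=\vartheta\,K_{P}(Id-Q)Nu$; applying $L$, using $LPu=0$ and $LK_{P}=Id$ on $\text{\rm Im}\,L$, together with $(Id-Q)Nu=Nu$ (because $QNu=0$), yields $Lu=\vartheta Nu$. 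Thus every fixed point of $\Phi_{\vartheta}$ satisfies simultaneously $QNu=0$ and $Lu=\vartheta Nu$. For $\vartheta\in\mathopen{]}0,1\mathclose{]}$ the relation $Lu=\vartheta Nu$ with $u\in\text{\rm dom}\,L\cap\partial\Omega$ is excluded by the first hypothesis; for $\vartheta=0$ it forces $Lu=0$, i.e.\ $u\in\partial\Omega\cap\ker L$, whereas $QNu=0$ is excluded by the second hypothesis. Hence no boundary fixed points occur, the solution set of the homotopy is a compact subset of $\Omega$, and the homotopy is admissible.

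To conclude, by the homotopy invariance of the Leray--Schauder degree,
\begin{equation*}
D_{L}(L-N,\Omega)=\text{\rm deg}_{LS}(Id-\Phi_{0},\Omega,0).
\end{equation*}
Since $\Phi_{0}(\overline{\Omega})\subseteq\ker L$, a finite-dimensional subspace, the reduction property of the Leray--Schauder degree gives $\text{\rm deg}_{LS}(Id-\Phi_{0},\Omega,0)=\text{\rm deg}_{B}\bigl((Id-\Phi_{0})|_{\ker L},\Omega\cap\ker L,0\bigr)$. Finally, for $v\in\ker L$ one has $Pv=v$, so $(Id-\Phi_{0})(v)=v-v-JQNv=-JQNv$, whence $D_{L}(L-N,\Omega)=\text{\rm deg}_{B}(-JQN|_{\ker L},\Omega\cap\ker L,0)$, as claimed. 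The only genuinely delicate points are the algebraic bookkeeping that extracts the two relations $QNu=0$ and $Lu=\vartheta Nu$ from the single fixed-point identity — this is precisely where the two hypotheses enter, for $\vartheta>0$ and $\vartheta=0$ respectively — and the verification that the homotopy is admissible; the latter is automatic here since $\Omega$ is bounded and $N$ is $L$-completely continuous.
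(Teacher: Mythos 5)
Your proof is correct, and it is essentially the same argument the paper relies on: the paper omits the proof of this lemma, referring to \cite[Theorem~IV.1]{GaMa-77} and \cite[Theorem~2.4]{Ma-93}, and your homotopy $\Phi_{\vartheta}(u)=Pu+JQNu+\vartheta K_{P}(Id-Q)Nu$ is precisely the fixed-point form of the classical Gaines--Mawhin family $Lu=\vartheta Nu+(1-\vartheta)QNu$ used there. The extraction of the two relations $QNu=0$ and $Lu=\vartheta Nu$ from the fixed-point identity, the admissibility of the homotopy (no boundary fixed points, compactness from $L$-complete continuity of $N$ and finite rank of $P$), and the finite-dimensional reduction to $\text{\rm deg}_{B}(-JQN|_{\ker L},\Omega\cap\ker L,0)$ are all handled correctly.
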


\begin{lemma}\label{lem-abs-deg0}
Let $L$ and $N$ be as above and let $\Omega\subseteq X$ be an open and bounded set.
Suppose that there exist a vector $v\neq0$ and a constant $\alpha_{0} > 0$ such that
\begin{equation*}
Lu \neq Nu + \alpha v,
\quad \forall \, u\in \text{\rm dom}\,L \cap \partial\Omega, \; \forall\, \alpha\in \mathopen{[}0,\alpha_{0}\mathclose{]},
\end{equation*}
and
\begin{equation*}
Lu \neq Nu + \alpha_{0} v, \quad \forall \, u\in \text{\rm dom}\,L \cap \Omega.
\end{equation*}
Then
\begin{equation*}
D_{L}(L-N,\Omega) = 0.
\end{equation*}
\end{lemma}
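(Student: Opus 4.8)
The plan is to obtain the vanishing of $D_{L}(L-N,\Omega)$ by deforming $N$ along the admissible homotopy $\alpha\mapsto N+\alpha v$ up to $\alpha=\alpha_{0}$, at which value the coincidence equation has \emph{no} solution anywhere in $\Omega$, and then invoking the existence property of the degree.

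First I would recall that, writing $N_{\alpha}u:=Nu+\alpha v$, the coincidence equation $Lu=N_{\alpha}u$ is equivalent to the fixed point problem $u=\Phi_{\alpha}(u)$ with
\begin{equation*}
\Phi_{\alpha}(u)=Pu+JQN_{\alpha}u+K_{P}(Id-Q)N_{\alpha}u.
\end{equation*}
Since $v$ is a fixed vector of $Z$, adding the affine term $\alpha v$ does not affect compactness, so each $N_{\alpha}$ is $L$-completely continuous and the family depends continuously (indeed affinely) on $\alpha$. I then set the homotopy $H_{\vartheta}u:=N u+\vartheta\alpha_{0}v$ for $\vartheta\in\mathopen{[}0,1\mathclose{]}$, so that $H_{0}=N$ and $H_{1}=N+\alpha_{0}v$.

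The first hypothesis, read with $\alpha=\vartheta\alpha_{0}\in\mathopen{[}0,\alpha_{0}\mathclose{]}$, gives $Lu\neq H_{\vartheta}u$ for every $u\in\text{\rm dom}\,L\cap\partial\Omega$ and every $\vartheta\in\mathopen{[}0,1\mathclose{]}$; equivalently, the coincidence set $\mathcal{S}=\bigcup_{\vartheta\in\mathopen{[}0,1\mathclose{]}}\{u\in\Omega\cap\text{\rm dom}\,L\colon Lu=H_{\vartheta}u\}$ avoids $\partial\Omega$. Because $\Omega$ is bounded and the family is $L$-completely continuous, $\mathcal{S}$ is a compact subset of the open set $\Omega$, and $K_{P}(Id-Q)H$ is compact on $\mathopen{[}0,1\mathclose{]}$ times any bounded neighborhood; thus the homotopic invariance property applies and yields
\begin{equation*}
D_{L}(L-N,\Omega)=D_{L}(L-H_{0},\Omega)=D_{L}(L-H_{1},\Omega)=D_{L}(L-(N+\alpha_{0}v),\Omega).
\end{equation*}

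Finally I would use the second hypothesis: $Lu\neq Nu+\alpha_{0}v$ for all $u\in\text{\rm dom}\,L\cap\Omega$ means exactly that $\text{\rm Fix}\,(\Phi_{\alpha_{0}},\Omega)=\emptyset$. By the contrapositive of the existence property, $D_{L}(L-(N+\alpha_{0}v),\Omega)=0$, and chaining this with the chain of equalities above gives $D_{L}(L-N,\Omega)=0$, as desired. The conceptual core of the argument — and the only point requiring care — is the verification that the homotopy is admissible: one must check that $\mathcal{S}$ is compact and does not touch $\partial\Omega$, which here reduces to the no-boundary-solution condition of the first hypothesis together with the $L$-complete continuity of $N$ and the boundedness of $\Omega$. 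Everything else is the standard observation that the endpoint perturbation $\alpha_{0}v$ evicts all coincidence points from $\Omega$ (not merely from its boundary), which is what forces the degree to vanish.
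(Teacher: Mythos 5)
Your proposal is correct and takes essentially the same route as the paper: the paper omits a direct proof of this lemma (citing it as classical), but it proves the more general Lemma~\ref{lem-deg0-deFigueiredo} by precisely your argument --- the affine homotopy $H_{\vartheta}u = Nu + \vartheta\alpha_{0}v$, compactness of the solution set $\mathcal{S}$ kept away from $\partial\Omega$ by the first hypothesis, homotopic invariance, and the existence property at $\alpha=\alpha_{0}$, where the second hypothesis excludes all coincidence points in $\Omega$. Your admissibility check (closedness of $\mathcal{S}$ via the no-boundary-solution condition, relative compactness via $L$-complete continuity on the bounded set $\Omega$) is exactly the step the paper's proof of the generalized lemma carries out.
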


Finally we state and prove a key lemma for the computation of the degree in open and unbounded sets.
This result is a more general version of Lemma~\ref{lem-abs-deg0}.

\begin{lemma}\label{lem-deg0-deFigueiredo}
Let $L$ and $N$ be as above and let $\Omega\subseteq X$ be an open set.
Suppose that there exist a vector $v\neq0$ and a constant $\alpha_{0} > 0$ such that
\begin{itemize}
 \item [$(a)$] $Lu \neq Nu + \alpha v$, for all $u\in \text{\rm dom}\,L \cap \partial\Omega$ and for all $\alpha\geq0$;
 \item [$(b)$] for all $\beta\geq0$ there exists $R_{\beta}>0$ such that if
 there exist $u\in \overline{\Omega}\cap \text{\rm dom}\,L$ and $\alpha\in\mathopen{[}0,\beta\mathclose{]}$ with
             $Lu = Nu + \alpha v$, then $\|u\|_{X}\leq R_{\beta}$;
 \item [$(c)$] there exists $\alpha_{0}>0$ such that $Lu \neq Nu + \alpha v$, for all $u\in \text{\rm dom}\,L \cap \Omega$ and $\alpha\geq\alpha_{0}$.
\end{itemize}
Then
\begin{equation*}
D_{L}(L-N,\Omega) = 0.
\end{equation*}
\end{lemma}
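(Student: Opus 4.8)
The plan is to reduce this unbounded-set statement to the mechanism already exploited in Lemma~\ref{lem-abs-deg0}: one pushes the operator $N$ in the fixed direction $v$ by means of an admissible homotopy and uses the homotopic invariance property of the coincidence degree to transport the degree of $L-N$ to that of a map which, by $(c)$, has no coincidence points in $\Omega$ at all, whence the degree vanishes. The three hypotheses play complementary roles: $(a)$ guarantees admissibility on $\partial\Omega$, $(b)$ supplies the compactness needed to work on the (possibly unbounded) set $\Omega$, and $(c)$ kills the solution set at the end of the homotopy.

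First I would check that $D_{L}(L-N,\Omega)$ is well defined. Choosing $\alpha=0$ in $(a)$ gives $Lu\neq Nu$ for all $u\in\text{\rm dom}\,L\cap\partial\Omega$, so $\text{\rm Fix}\,(\Phi,\Omega)$ does not meet $\partial\Omega$, while $(b)$ with $\beta=0$ shows that this set is bounded; since $N$ is $L$-completely continuous, a bounded closed solution set lies in a relatively compact image and is therefore compact, so the degree on $\Omega$ is defined in the sense recalled above. I would then introduce the homotopy $H_{\vartheta} u := Nu + \vartheta\,\alpha_{0} v$, $\vartheta\in\mathopen{[}0,1\mathclose{]}$, joining $H_{0}=N$ to $H_{1}=N+\alpha_{0}v$. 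If $Lu=H_{\vartheta} u$ for some $u\in\text{\rm dom}\,L\cap\partial\Omega$, then $Lu=Nu+\alpha v$ with $\alpha=\vartheta\alpha_{0}\geq0$, contradicting $(a)$; hence the homotopy is admissible on the boundary. Moreover every $u$ in the homotopy solution set $\mathcal{S}:=\bigcup_{\vartheta\in\mathopen{[}0,1\mathclose{]}}\{u\in\Omega\cap\text{\rm dom}\,L : Lu=H_{\vartheta} u\}$ satisfies $Lu=Nu+\alpha v$ with $\alpha\in\mathopen{[}0,\alpha_{0}\mathclose{]}$, so $(b)$ with $\beta=\alpha_{0}$ yields $\|u\|_{X}\leq R_{\alpha_{0}}$; thus $\mathcal{S}$ is bounded, closed, and contained in the relatively compact image of the $L$-completely continuous operator, hence compact. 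As $\mathcal{S}$ is compact and $\Omega$ open, one can select a bounded open neighbourhood $\mathcal{W}$ with $\overline{\mathcal{W}}\subseteq\Omega$, on which $K_{P}(Id-Q)H$ is compact, so the homotopic invariance property applies and gives
\[
D_{L}(L-N,\Omega)=D_{L}(L-N-\alpha_{0}v,\Omega).
\]

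Finally, condition $(c)$ with $\alpha=\alpha_{0}$ states that $Lu\neq Nu+\alpha_{0}v$ for every $u\in\text{\rm dom}\,L\cap\Omega$, so the solution set of $H_{1}$ in $\Omega$ is empty and therefore $D_{L}(L-N-\alpha_{0}v,\Omega)=0$; combining with the displayed identity gives $D_{L}(L-N,\Omega)=0$. I expect the main obstacle to lie not in the homotopy itself but in the bookkeeping required to invoke the unbounded-set version of homotopic invariance, namely producing the compact set $\mathcal{S}$ and a bounded neighbourhood $\mathcal{W}$ with $\overline{\mathcal{W}}\subseteq\Omega$ so that the operators are genuinely compact; this is exactly where hypothesis $(b)$ is indispensable, since it is the uniform bound over the whole range $\alpha\in\mathopen{[}0,\alpha_{0}\mathclose{]}$ that upgrades boundedness of $\mathcal{S}$ to the compactness demanded by the degree theory, whereas $(a)$ and $(c)$ only control the boundary and the terminal value $\alpha_{0}$.
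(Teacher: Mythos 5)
Your proposal is correct and follows essentially the same route as the paper's own proof: the homotopy $Lu = Nu + \vartheta\alpha_{0}v$, with $(a)$ giving boundary admissibility, $(b)$ (applied with $\beta=\alpha_{0}$) giving compactness of the full solution set $\mathcal{S}$ so that the unbounded-set homotopy invariance applies, and $(c)$ forcing the degree of $L-N-\alpha_{0}v$ to vanish via the existence property. Your additional care in justifying why boundedness of $\mathcal{S}$ upgrades to compactness (through the $L$-complete continuity of $N$ and closedness via $(a)$) is exactly the point the paper treats slightly more tersely, so there is no substantive difference.
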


\begin{proof}
For $\alpha\geq 0$, let us consider the set
\begin{equation*}
\mathcal{R}_{\alpha}
:=\bigl{\{} u\in \overline{\Omega}\cap \text{\rm dom}\,L \colon Lu = Nu + \alpha v\bigr{\}}
= \bigl{\{} u\in \overline{\Omega} \colon u = \Phi u + \alpha v^{*}\bigr{\}},
\end{equation*}
where $v^{*} := JQv + K_{P}(Id-Q)v$.
Without loss of generality, we assume that $R_{\alpha'}<R_{\alpha''}$ for $\alpha'<\alpha''$.
By conditions $(a)$, for all $\alpha\geq0$, the solution set $\mathcal{R}_{\alpha}$ is disjoint from $\partial\Omega$.
Moreover, by conditions $(b)$ and $(c)$, $\mathcal{R}_{\alpha}$ is contained in $\Omega\cap B(0,R_{\alpha_{0}+1})$.
So $\mathcal{R}_{\alpha}$ is bounded, and hence compact.
In this manner we have proved that the coincidence degree $D_{L}(L-N-\alpha v,\Omega)$ is well defined for any $\alpha\geq 0$.

Now, condition $(c)$, together with the property of existence of solutions when the degree $D_{L}$ is nonzero,
implies that there exists $\alpha_{0}\geq 0$ such that
\begin{equation*}
D_{L}(L-N- \alpha_{0} v,\Omega)=0.
\end{equation*}
On the other hand, from condition $(b)$ applied to $\beta = \alpha_{0}$,
repeating the same argument as above, we find that the set
\begin{equation*}
\mathcal{S}
:= \bigcup_{\alpha\in \mathopen{[}0,\alpha_{0}\mathclose{]}} \mathcal{R}_{\alpha} \,
= \bigcup_{\alpha\in \mathopen{[}0,\alpha_{0}\mathclose{]}} \bigl{\{}u\in \overline{\Omega}\cap \text{\rm dom}\,L \colon Lu = N u + \alpha v\bigr{\}}
\end{equation*}
is a compact subset of $\Omega$. Hence, by the homotopic invariance of the coincidence degree, we have that
\begin{equation*}
D_{L}(L-N,\Omega) = D_{L}(L-N - \alpha_{0} v,\Omega) = 0.
\end{equation*}
This concludes the proof.
\end{proof}

\section*{Acknowledgments}

This work benefited from several fruitful discussions with Alberto Boscaggin and Fabio Zanolin, and from helpful suggestions of Elisa Sovrano.
The author was also grateful to Pasquale Candito, Giuseppina D'Agu\`{i}, Roberto Livrea, Salvatore A. Marano, Sunra Mosconi, for the invitation and the perfect hospitality both in the special session SS92
of the AIMS Conference in Orlando (July 1--5, 2016) and in the workshop in Catania (October 28--29, 2016). Indeed this survey paper is based on a talk given by the author in Catania.

The author wrote the present manuscript in his first month at University of Mons and he is grateful to the Department of Mathematics for the pleasant welcome, in particular to Christophe Troestler, and to Aline Goulard, Marie-Aurore Mainil, Monia Mestiri.

\bibliographystyle{elsart-num-sort}
\bibliography{Feltrin_biblio}

\begin{thebibliography}{10}
\expandafter\ifx\csname url\endcsname\relax
  \def\url#1{\texttt{#1}}\fi
\expandafter\ifx\csname urlprefix\endcsname\relax\def\urlprefix{URL }\fi

\bibitem{Ac-09}
N.~Ackermann, Long-time dynamics in semilinear parabolic problems with
  autocatalysis, in: Recent progress on reaction-diffusion systems and
  viscosity solutions, World Sci. Publ., Hackensack, NJ, 2009, pp. 1--30.

\bibitem{AlTa-93}
S.~Alama, G.~Tarantello, On semilinear elliptic equations with indefinite
  nonlinearities, Calc. Var. Partial Differential Equations 1 (1993) 439--475.

\bibitem{AmLG-98}
H.~Amann, J.~L{\'o}pez-G{\'o}mez, A priori bounds and multiple solutions for
  superlinear indefinite elliptic problems, J. Differential Equations 146
  (1998) 336--374.

\bibitem{Ba-88}
M.~Barnsley, Fractals everywhere, Academic Press, Inc., Boston, MA, 1988.

\bibitem{BaBoVe-15}
V.~L. Barutello, A.~Boscaggin, G.~Verzini, Positive solutions with a complex
  behavior for superlinear indefinite {ODE}s on the real line, J. Differential
  Equations 259 (2015) 3448--3489.

\bibitem{BeCDNi-94}
H.~Berestycki, I.~Capuzzo-Dolcetta, L.~Nirenberg, Superlinear indefinite
  elliptic problems and nonlinear {L}iouville theorems, Topol. Methods
  Nonlinear Anal. 4 (1994) 59--78.

\bibitem{BePe-07}
J.~Berstel, D.~Perrin, The origins of combinatorics on words, European J.
  Combin. 28 (2007) 996--1022.

\bibitem{BoGoHa-05}
D.~Bonheure, J.~M. Gomes, P.~Habets, Multiple positive solutions of superlinear
  elliptic problems with sign-changing weight, J. Differential Equations 214
  (2005) 36--64.

\bibitem{Bo-16sur}
A.~Boscaggin, Positive periodic solutions to nonlinear {ODE}s with indefinite
  weight: an overview, Rend. Semin. Mat. Univ. Politec. Torino{}, to appear.

\bibitem{BoDaPa-pp2016}
A.~Boscaggin, W.~Dambrosio, D.~Papini, Multiple positive solutions to elliptic
  boundary blow-up problems, submitted.

\bibitem{BoFe-16}
A.~Boscaggin, G.~Feltrin, Positive subharmonic solutions to nonlinear {ODE}s
  with indefinite weight, Commun. Contemp. Math.{}, to appear.

\bibitem{BoFeZa-17tams}
A.~Boscaggin, G.~Feltrin, F.~Zanolin, Positive solutions for super-sublinear
  indefinite problems: high multiplicity results via coincidence degree, Trans.
  Amer. Math. Soc.{}, to appear.

\bibitem{BoFeZa-16prse}
A.~Boscaggin, G.~Feltrin, F.~Zanolin, Pairs of positive periodic solutions of
  nonlinear {ODE}s with indefinite weight: a topological degree approach for
  the super-sublinear case, Proc. Roy. Soc. Edinburgh Sect. A 146 (2016)
  449--474.

\bibitem{BoZa-12}
A.~Boscaggin, F.~Zanolin, Positive periodic solutions of second order nonlinear
  equations with indefinite weight: multiplicity results and complex dynamics,
  J. Differential Equations 252 (2012) 2922--2950.

\bibitem{BoZa-13}
A.~Boscaggin, F.~Zanolin, Subharmonic solutions for nonlinear second order
  equations in presence of lower and upper solutions, Discrete Contin. Dyn.
  Syst. 33 (2013) 89--110.

\bibitem{BrHe-90}
K.~J. Brown, P.~Hess, Stability and uniqueness of positive solutions for a
  semi-linear elliptic boundary value problem, Differential Integral Equations
  3 (1990) 201--207.

\bibitem{Bu-76}
G.~J. Butler, Rapid oscillation, nonextendability, and the existence of
  periodic solutions to second order nonlinear ordinary differential equations,
  J. Differential Equations 22 (1976) 467--477.

\bibitem{CaDaPa-02}
A.~Capietto, W.~Dambrosio, D.~Papini, Superlinear indefinite equations on the
  real line and chaotic dynamics, J. Differential Equations 181 (2002)
  419--438.

\bibitem{CaHeMaZa-94}
A.~Capietto, M.~Henrard, J.~Mawhin, F.~Zanolin, A continuation approach to some
  forced superlinear {S}turm-{L}iouville boundary value problems, Topol.
  Methods Nonlinear Anal. 3 (1994) 81--100.

\bibitem{Du-95}
B.~S. Du, The minimal number of periodic orbits of periods guaranteed in
  {S}harkovski\u\i 's theorem, Bull. Austral. Math. Soc. 31 (1985) 89--103.

\bibitem{DuFo-04}
D.~S. Dummit, R.~M. Foote, Abstract algebra, 3rd ed., John Wiley \& Sons, Inc.,
  Hoboken, NJ, 2004.

\bibitem{Fe-16thesis}
G.~Feltrin, Positive solutions to indefinite problems: a topological approach,
  Ph.D. thesis, SISSA (Trieste), 2016.

\bibitem{FeZa-15ade}
G.~Feltrin, F.~Zanolin, Existence of positive solutions in the superlinear case
  via coincidence degree: the {N}eumann and the periodic boundary value
  problems, Adv. Differential Equations 20 (2015) 937--982.

\bibitem{FeZa-15jde}
G.~Feltrin, F.~Zanolin, Multiple positive solutions for a superlinear problem:
  a topological approach, J. Differential Equations 259 (2015) 925--963.

\bibitem{FeZa-pp2015}
G.~Feltrin, F.~Zanolin, Multiplicity of positive periodic solutions in the
  superlinear indefinite case via coincidence degree, J. Differential
  Equations{}, to appear.

\bibitem{GaMa-77}
R.~E. Gaines, J.~Mawhin, Coincidence degree, and nonlinear differential
  equations, vol. 568 of Lecture Notes in Mathematics, Springer-Verlag,
  Berlin-New York, 1977.

\bibitem{GaHaZa-03mod}
M.~Gaudenzi, P.~Habets, F.~Zanolin, An example of a superlinear problem with
  multiple positive solutions, Atti Sem. Mat. Fis. Univ. Modena 51 (2003)
  259--272.

\bibitem{GiRi-61}
E.~N. Gilbert, J.~Riordan, Symmetry types of periodic sequences, Illinois J.
  Math. 5 (1961) 657--665.

\bibitem{GiGo-09jde}
P.~M. Gir{\~a}o, J.~M. Gomes, Multibump nodal solutions for an indefinite
  superlinear elliptic problem, J. Differential Equations 247 (2009)
  1001--1012.

\bibitem{GRLG-00}
R.~G{\'o}mez-Re{\~n}asco, J.~L{\'o}pez-G{\'o}mez, The effect of varying
  coefficients on the dynamics of a class of superlinear indefinite
  reaction-diffusion equations, J. Differential Equations 167 (2000) 36--72.

\bibitem{HeKa-80}
P.~Hess, T.~Kato, On some linear and nonlinear eigenvalue problems with an
  indefinite weight function, Comm. Partial Differential Equations 5 (1980)
  999--1030.

\bibitem{JoSaYo-10}
M.~R. Joglekar, E.~Sander, J.~A. Yorke, Fixed points indices and
  period-doubling cascades, J. Fixed Point Theory Appl. 8 (2010) 151--176.

\bibitem{KoRaWo-14}
T.~Kociumaka, J.~Radoszewski, W.~Rytter, Computing {$k$}-th {L}yndon word and
  decoding lexicographically minimal de {B}ruijn sequence, in: Combinatorial
  pattern matching, vol. 8486 of Lecture Notes in Comput. Sci., Springer, Cham,
  2014, pp. 202--211.

\bibitem{Lo-97}
M.~Lothaire, Combinatorics on words, Cambridge Mathematical Library, Cambridge
  University Press, Cambridge, 1997.

\bibitem{Ma-1891}
P.~A. MacMahon, Applications of a theory of permutations in circular procession
  to the theory of numbers, Proc. London Math. Soc. S1-23 (1891/92) 305--313.

\bibitem{Ma-79}
J.~Mawhin, Topological degree methods in nonlinear boundary value problems,
  vol.~40 of CBMS Regional Conference Series in Mathematics, American
  Mathematical Society, Providence, R.I., 1979.

\bibitem{Ma-93}
J.~Mawhin, Topological degree and boundary value problems for nonlinear
  differential equations, in: Topological methods for ordinary differential
  equations ({M}ontecatini {T}erme, 1991), vol. 1537 of Lecture Notes in
  Mathematics, Springer, Berlin, 1993, pp. 74--142.

\bibitem{MaReZa-00}
J.~Mawhin, C.~Rebelo, F.~Zanolin, Continuation theorems for
  {A}mbrosetti-{P}rodi type periodic problems, Commun. Contemp. Math. 2 (2000)
  87--126.

\bibitem{Mo-96}
P.~Morassi, A note on the construction of coincidence degree, Boll. Un. Mat.
  Ital. A (7) 10 (1996) 421--433.

\bibitem{Nu-73}
R.~D. Nussbaum, Periodic solutions of some nonlinear, autonomous functional
  differential equations. {II}, J. Differential Equations 14 (1973) 360--394.

\bibitem{Nu-85}
R.~D. Nussbaum, The fixed point index and some applications, vol.~94 of
  S\'eminaire de Math\'ematiques Sup\'erieures [Seminar on Higher Mathematics],
  Presses de l'Universit\'e de Montr\'eal, Montreal, QC, 1985.

\bibitem{Nu-93}
R.~D. Nussbaum, The fixed point index and fixed point theorems, in: Topological
  methods for ordinary differential equations ({M}ontecatini {T}erme, 1991),
  vol. 1537 of Lecture Notes in Math., Springer, Berlin, 1993, pp. 143--205.

\bibitem{PaZa-00}
D.~Papini, F.~Zanolin, A topological approach to superlinear indefinite
  boundary value problems, Topol. Methods Nonlinear Anal. 15 (2000) 203--233.

\bibitem{PaZa-04}
D.~Papini, F.~Zanolin, On the periodic boundary value problem and chaotic-like
  dynamics for nonlinear {H}ill's equations, Adv. Nonlinear Stud. 4 (2004)
  71--91.

\bibitem{Sl-oeis}
N.~J.~A. Sloane, The on-line encyclopedia of integer sequences, published
  electronically at http://oeis.org, 2010, Sequence A001037.

\bibitem{So-pp2016aims}
E.~Sovrano, How to get complex dynamics? {A} note on a topological approach,
  submitted.

\bibitem{So-pp2016}
E.~Sovrano, A negative answer to a conjecture arising in the study of
  selection-migration models in population genetics, submitted.

\bibitem{TeVe-00}
S.~Terracini, G.~Verzini, Oscillating solutions to second-order {ODE}s with
  indefinite superlinear nonlinearities, Nonlinearity 13 (2000) 1501--1514.

\end{thebibliography}

\bigskip
\begin{flushleft}

{\small{\it Preprint}}

{\small{\it January 2017}}

\end{flushleft}

\end{document}